\documentclass[a4paper,12pt,reqno]{amsart}
\usepackage{amsmath,amsfonts,amssymb}
\usepackage[style=numeric]{biblatex}
\addbibresource{biblio.bib}
\usepackage{verbatim}
\usepackage{dsfont}
\usepackage{url}
\usepackage[colorlinks]{hyperref}
\usepackage{mathtools}
\usepackage[latin1]{inputenc}
\usepackage{enumitem}
\usepackage[english]{babel}
\usepackage{tikz}
\usepackage[margin=3.5cm]{geometry}
\usepackage{bbm}
\usepackage{multirow}
\usepackage{mathrsfs}
\usepackage{caption}
\usepackage{float}
\restylefloat{table}
\usepackage{graphicx}

\newcommand{\Z}{\mathbb Z}

\newcommand{\Tr}{\mathrm{Tr}}

\newcommand{\fqn}{\mathbb{F}_{q^n}}
\newcommand{\F}{\mathbb{F}}

\newcommand{\fq}{\mathbb{F}_q}

\newtheorem{theorem}{Theorem}[section]

\newtheorem{proposition}[theorem]{Proposition}
\newtheorem{definition}[theorem]{Definition}

\newtheorem{lemma}[theorem]{Lemma}
\newtheorem{corollary}[theorem]{Corollary}
\newtheorem{example}[theorem]{Example}

\DeclareMathOperator{\rank}{rank}

\author{Jos\'e Alves Oliveira}
\author{Daniela  Oliveira }
\author{F. E. Brochero Mart\'{i}nez}
%
%
%
%
%
%
\keywords{quadratic forms, superelliptic curves }
\subjclass[2020]{12E20 (primary) and 11G20 (secondary)}

\title{The number of rational points of a class of superelliptic curves}
\begin{document} 

\maketitle 

\begin{abstract}
In this paper, we study the number of $\fqn$-rational points on the affine curve $\mathcal{X}_{d,a,b}$ given by the equation
$$
y^d=ax\Tr(x)+b,
$$
where $\Tr$ denote the trace function from $\fqn$ to $\fq$ and $d$ is a positive integer. In particular, we present bounds for the number of $\fq$-rational points on $\mathcal{X}_{d,a,b}$ and, for the cases where $d$ satisfies a natural condition, explicit formulas for the number of rational points are obtained. Particularly, a complete characterization is given for the case $d=2$. As a consequence of our results, we compute the number of elements $\alpha$ in $\fqn$ such that $\alpha$ and $\Tr(\alpha)$ are quadratic residues in $\fqn$. 
\end{abstract}

\section{Introduction}
Let $\fqn$ be a finite field with $q^n$ elements, where $q=p^s$ and $p$ is an odd prime. Throughout the paper,   $\Tr(x)$ denotes the trace function from $\fqn$ into $\fq$. The study of the number and existence of special elements in finite fields dates back to the 1980s. The famous \textit{Primitive
Normal Basis Theorem} was firstly proved by Lenstra
and Schoof~\cite{lenstra1987primitive} in 1987.

More recently, sophisticated techniques have been created and employed in different problems regarding elements that satisfy special conditions (for example, see \cite{MR4252114,cohen2021primitive, cohen2019lehmer, gupta2018primitive}). The condition of having a fixed trace often appears in such articles because of its practical applications.

A problem that naturally arises is to find the number of elements $\alpha\in\fqn$ such that $\alpha$ and $\Tr(\alpha)$ are both quadratic residues in $\fqn$. This problem is easily solved in the case where $n$ is even, since any element of $\fq$ is a quadratic residue in $\fqn$, so that, in particular, $\Tr(\alpha)$ does so. The problem gets more interesting for the case where $n$ is odd. Heuristically, $q^n/4$ elements in $\fqn$ must satisfy these two conditions, but there is no result in the literature addressing this problem. In both cases, such number of special elements is closely related to the number of $\fqn$-rational points on the affine curve $y^2=x \Tr(x)$ (see the proof of Theorem~\ref{teores}). In this paper, we study a broader class of affine curves given by 
\[\mathcal{X}_{d,a,b}:y^d = ax\Tr(x) + b,\]
where $a,b \in \fqn$ and $d$ is a positive integer.
The curve $\mathcal{X}_{d,a,b}$ belongs to a wide family of curves called \textit{superelliptic} curves, whose points are given by the solutions of the equation
\begin{equation}\label{superellipiceq}
    y^d=f(x).
\end{equation}

There are few results in the literature regarding general superelliptic curves. For example, in \cite{cheong2005}  the authors give the distribution of points on smooth superelliptic curves over a  finite field, when their degree goes to infinity. In  \cite{kurlberg2009}  the authors describe the fluctuation in the number of points on a hyperelliptic curve, which is the especial class of superelliptic curves with $d=2$.

In general, it is hard to compute the number of rational points on superelliptic curves, but this can be done in some cases, namely, for those that arise by choosing a suitable polynomial $f$ in \eqref{superellipiceq}. For example, if $f(x)=-x^m+b$, the curve is the well-known Fermat curve, for which explicit formulas and bounds for the number of points are known (\cite{oliveira2021diagonal,wolfmann1992number}). The curve defined by Equation~\eqref{superellipiceq} over $\fqn$ with $f(x)=x^q-x+b$ is known as Artin-Schreier curve. These special superellipic curves have also been well studied \cite{coulter1998explicit,coulter2002number,wolfmann1989number}.

While these particular cases are well studied, a study of the number of rational points on $\mathcal{X}_{d,a,b}$ has not been provided. Our goal in this paper is to study the number of rational points on this curve, providing bounds for the number of rational points and explicit formulas for cases where $\mathcal{X}_{d,a,b}$ satisfy certain conditions.

In order to do that, we use the fact that the map $x\mapsto x\Tr(x)$ is a quadratic form over $\fqn$. Along the paper, we employ some classical results on quadratic forms over finite fields to provide an expression for the number of rational points on $\mathcal{X}_{d,a,b}$ in terms of Gauss sums (Proposition~\ref{termsofGauss}). Using this expression, we employ results on Gauss sums in order to obtain bounds for the number of rational points and, for suitable conditions on $d$, provide explicit formulas for this number. As a consequence of our results, we compute the number of elements $\alpha$ in $\fqn$ such that both $\alpha$ and $\Tr(\alpha)$ are quadratic residues in $\fqn$ (Theorem~\ref{teores}).

The paper is organized as follows. In Section~\ref{sec2}, we present some remarks, comments and statements of our main results. Section~\ref{sec3} provides preliminary results on quadratic forms and Gauss sums that are used along the paper. In   Section~\ref{sec4}, we provide an expression for the number of rational points on the curve  $\mathcal{X}_{d,a,b}$ in terms of Gauss sums. Bounds and explicit formulas for the number of rational points are presented in Section~\ref{sec5}. In Section~\ref{sec7}, we focus on the case $\Tr(b/a)\neq 0$.

\section{Main results}\label{sec2}
In this section, we present  our main results, along with a few comments. We observe that the curves $\mathcal X_{d,a,b}(\fqn)$ and $\mathcal X_{\gcd(d,q^n-1),a,b}(\fqn)$ have the same number of $\fqn$-rational points. Therefore, we may assume without loss of generality that $d$ divides $q^n-1$. 

In \cite{stohr1986weierstrass}, the authors give an improvement of the Hasse-Weil bound for curves with high genus. The curve $\mathcal{X}_{d,a,b}$ does not satisfy the conditions imposed in \cite{stohr1986weierstrass}, but as a high genus curve, it is expected to it to be a curve whose number of points is far from Hasse-Weil's bound. Indeed, it turns out that Hasse-Weil can be significantly improved for  $\mathcal{X}_{d,a,b}$. In Theorem~\ref{theoremdcota}, we provide sharp bounds for the number of rational points on such curves. In order to present this result, we introduce some notation. For a divisor $d$ of $q^n-1$, $\chi_d$ denotes a multiplicative character of $\fqn^*$ with order $d$. As usual, $\mathcal{X}_{d,a,b}(\fqn)$ denotes the set of $\fqn$-rational points on $\mathcal{X}_{d,a,b}$.

\begin{definition}\label{vD}
Let $v=\gcd\big(d,\tfrac{q^n-1}{q-1}\big)$, $D=\frac{d}{v}$ , $B=\Tr\big(\frac{b}{a}\big)$ and 
$$\tau = \begin{cases} 
	1, & \text{ if } p\equiv 1 \pmod{4};\\
	i,  & \text{ if } p\equiv 3 \pmod{4}.\\
\end{cases}$$
\end{definition}
Now we are able to present bounds for the number of $\fqn$-rational points on the curve $\mathcal X_{d,a,b}$
\begin{theorem}\label{theoremdcota} The number of rational points on the curve $\mathcal X_{d,a,b}$ satisfies the following relations:
\begin{equation}|\mathcal{X}_{d,a,b}(\fqn)|-q^n+(-1)^sq^{n-1}\sum_{\ell=1}^{d-1} \chi_d^\ell(b)\eqqcolon N_{d,a,b,n},\label{N_dabn}
\end{equation}
where $$|N_{d,a,b,n}|
\le\begin{cases}
		(\tfrac{d}{D}-1)(q-1)q^{\frac{n}{2}-1}, &\text{ if }D\text{ is odd and } B=0;\\
		(q-1)(\tfrac{d}{D}q^{\frac{n-1}{2}}+(\tfrac{d}{D}-1)q^{\frac{n}{2}-1}), &\text{ if }D\text{ is even and } B=0.\\
(d-1)(q^{\frac{n}{2}}+q^{\frac{n-1}{2}}) & \text{ if } B \neq 0.
	\end{cases}$$
	
\end{theorem}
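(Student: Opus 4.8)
The plan is to turn the point count into multiplicative character sums, then into Gauss sums, and finally to estimate those Gauss sums. Since $d\mid q^n-1$, for $c\in\fqn$ one has $\#\{y\in\fqn:y^d=c\}=\sum_{\ell=0}^{d-1}\chi_d^\ell(c)$ (with $\chi_d^0$ the constant function $1$), so summing over $x$ gives
$$|\mathcal X_{d,a,b}(\fqn)|=q^n+\sum_{\ell=1}^{d-1}S_\ell,\qquad S_\ell:=\sum_{x\in\fqn}\chi_d^\ell\bigl(ax\Tr(x)+b\bigr).$$
Proposition~\ref{termsofGauss} already records the value of each $S_\ell$ as an explicit expression in Gauss sums, and assembling these over $\ell$ is precisely how one is led to the quantity $N_{d,a,b,n}$ of \eqref{N_dabn}; so the theorem reduces to bounding that expression.

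For the evaluation of $S_\ell$ I would partition the sum according to the value $t=\Tr(x)\in\fq$. The slice $t=0$ contributes $|\ker\Tr|\,\chi_d^\ell(b)=q^{n-1}\chi_d^\ell(b)$, and summed over $\ell$ this is exactly the term taken to the left in \eqref{N_dabn}. For each $t\neq0$ the inner sum $\sum_{\Tr(x)=t}\chi_d^\ell(axt+b)$ is a sum of $\chi_d^\ell$ over the affine hyperplane $\{x:\Tr(x/a)=t^2+B\}$, with $B=\Tr(b/a)$; writing the hyperplane's indicator through additive characters of $\fq$ and collapsing the inner sum over $\fqn$ into a Gauss sum $G(\chi_d^\ell)$ over $\fqn$ shows it equals $\chi_d^\ell(a)\,q^{-1}G(\chi_d^\ell)\sum_{u\in\fq^*}\overline{\lambda_\ell}(u)\,\psi\bigl(-u(t^2+B)\bigr)$, where $\lambda_\ell:=\chi_d^\ell|_{\fq^*}$ and $\psi$ is a fixed nontrivial additive character of $\fq$. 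Summing over $t\in\fq^*$ — noting $t\mapsto t^2$ is two-to-one onto $(\fq^*)^2$ — rewrites $S_\ell-q^{n-1}\chi_d^\ell(b)$ as a combination of Gauss and Jacobi sums over $\fq$ and $\fqn$. The quadratic Gauss sum of $\fq$ that appears is what produces the constant $\tau$ (and, via Hasse--Davenport, the sign $(-1)^s$) in the statement, but for the bounds only the moduli $|G(\chi_d^\ell)|=q^{n/2}$ over $\fqn$ and $|G_q(\eta_q)|=q^{1/2}$ over $\fq$ are needed, $\eta_q$ denoting the quadratic character of $\fq$.

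The last ingredient is an elementary count. The restriction $\lambda_\ell$ is trivial exactly when $D\mid\ell$ — this is where $v=\gcd\!\bigl(d,\tfrac{q^n-1}{q-1}\bigr)$ and $D=d/v$ enter, since $\chi_d|_{\fq^*}$ has order $D$ — so there are $\tfrac dD-1$ such $\ell$ with $1\le\ell\le d-1$; and $\lambda_\ell$ equals the order-two character $\eta_q$ exactly when $\ell$ is an odd multiple of $D/2$, which is possible only when $D$ is even, in which case there are $\tfrac dD$ such $\ell$. When $B=0$ the inner $\fq^*$-sums collapse: $\sum_{u\in\fq^*}\overline{\lambda_\ell}(u)$ equals $q-1$ or $0$ according as $\lambda_\ell$ is or is not trivial, and likewise for the quadratic-twisted sum with $\lambda_\ell=\eta_q$, so $S_\ell=q^{n-1}\chi_d^\ell(b)$ whenever $\lambda_\ell\notin\{\varepsilon,\eta_q\}$, while each $\lambda_\ell=\varepsilon$ index contributes a term of modulus $(q-1)q^{n/2-1}$ and each $\lambda_\ell=\eta_q$ index a term of modulus $(q-1)q^{(n-1)/2}$; summing over the $\tfrac dD-1$, respectively $\tfrac dD$, such indices yields the first two cases. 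When $B\neq0$ every index may contribute, but bounding the $\fq^*$-sums trivially by $q^{1/2}$ gives $|S_\ell-q^{n-1}\chi_d^\ell(b)|\le q^{n/2}+q^{(n-1)/2}$, and summing over all $d-1$ indices yields the third case.

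I expect the real work to be the bookkeeping in the $t\neq0$ slices — pinning down for which $\ell$ the character $\lambda_\ell$ is trivial or quadratic (the precise role of $v$ and $D$) and keeping the regimes $B=0$ and $B\neq0$ cleanly separated — together with the sign and constant bookkeeping that brings in $\tau$ and $(-1)^s$; the analytic input, namely the classical Gauss-sum evaluations and the absolute values $|G|=q^{n/2}$ over $\fqn$ and $|J|,|G_q|\le q^{1/2}$ over $\fq$, is standard and is assembled in Section~\ref{sec3}.
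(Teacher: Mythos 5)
Your proposal is correct and follows essentially the same route as the paper: the bound is obtained by plugging in Proposition~\ref{termsofGauss} and applying the triangle inequality with $|G_n(\chi_d^\ell)|=q^{n/2}$ and $|G_1(\eta)|\le q^{1/2}$ (Lemma~\ref{gausslemm}), after counting the $\tfrac dD-1$ indices $\ell$ with $D\mid\ell$ (trivial restriction to $\fq^*$) and, when $D$ is even, the $\tfrac dD$ indices with $\ell\equiv D/2\pmod D$ (quadratic restriction). Your supplementary sketch of re-deriving the Gauss-sum expression by slicing over $t=\Tr(x)$ and parametrizing each slice as a hyperplane is a legitimate alternative to the paper's quadratic-form computation (Theorem~\ref{dd}), but it is not needed once Proposition~\ref{termsofGauss} is taken as given.
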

We recall that the Hasse-Weil bound applied for $\mathcal X_{d,a,b}$ implies that
$$||\mathcal X_{d,a,b}(\fqn)|-q^n|\le 2g q^{\frac{n}{2}},$$
where the genus $g$ can be computed via   Riemann-Hurwitz's formula~\cite{galbraith2002arithmetic} and equals, at a minimum, the number
$$\tfrac{1}2(d-1)(q^{n-1}-1)-d+1.$$
Note that Theorem~\ref{theoremdcota} implies that
$$||\mathcal X_{d,a,b}(\fqn)|-q^n|\le (d-1)q^{n-1} + (q-1)(dq^{\frac{n-1}{2}}+(d-1)q^{\frac{n}{2}-1}),$$
which represents a significant improvement of Hasse-Weil's bound.

From now on, our main goal is to provide explicit formulas for the value $N = N_{d,a,b,n}$ defined in Theorem~\ref{theoremdcota}. In particular, the results obtained here include curves whose number of points attain the bounds provided in Theorem~\ref{theoremdcota}. Our starting point is the case $d=2$, for which we present a simple expression for 
the number of affine rational points, that is given in the next theorem.    

\begin{theorem}\label{theoremdcase2} 
The number of rational points on the curve $\mathcal X_{2,a,b}(\fqn)$ is
\begin{enumerate}\item If $B=0$  
$$\mid\mathcal{X}_{2,a,b}(\fqn)\mid=q^{n} + q^{n-1}\chi_2(b) + N_1\tau^{ns}\chi_2(-a)(q-1),$$
where 	$$N_1=\begin{cases}
	q^{\frac{n-2}2},&\text{ if }n\text{ is even;}\\
	\tau^{s}q^{\frac{n-1}2},&\text{ if }n\text{ is odd.}\\
	\end{cases}$$
	
\item	If $B\neq0$  
$$\mid\mathcal{X}_{2,a,b}(\fqn)\mid=q^{n} + q^{n-1}\chi_2(b) - N_2\tau^{ns},$$
where 
	$$N_2=\begin{cases}
	q^{\frac{n-2}2}[\chi_2(-a)\tau^{2s}q+\chi_2(-aB)],&\text{ if }n\text{ is even;}\\
		q^{\frac{n-1}2}\tau^s(\chi_2(-a)+\chi_2(-aB)),&\text{ if }n\text{ is odd.}\\
	\end{cases}$$
\end{enumerate}
\end{theorem}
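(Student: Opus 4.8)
The plan is to specialize the general Gauss-sum expression for $|\mathcal{X}_{d,a,b}(\fqn)|$ provided by Proposition~\ref{termsofGauss} to the case $d=2$, where the only nontrivial multiplicative character of order dividing $2$ is the quadratic character $\chi_2$, and then evaluate the resulting Gauss sums explicitly. Since $\chi_2$ is real-valued, the sums over $\ell$ in \eqref{N_dabn} collapse to a single term $\chi_2(b)q^{n-1}$, so the whole content of the theorem is the evaluation of $N = N_{2,a,b,n}$, which by the structure of Proposition~\ref{termsofGauss} will be a combination of Gauss sums $g(\chi_2)$ over $\fqn$ (and possibly over $\fq$) together with the additive characters coming from the quadratic form $x \mapsto a x \Tr(x)$. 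The key classical inputs will be: (i) the explicit value of the quadratic Gauss sum over $\fqn$, namely $g(\chi_2) = (-1)^{ns-1}\tau^{ns}\,q^{n/2}$ or equivalently $\sum_{x\in\fqn}\chi_2(x)\psi(x) = \tau^{ns} q^{n/2}$ up to sign (this is where the factor $\tau^{ns}$ and the distinction $p\equiv 1$ vs $p\equiv 3 \pmod 4$ enters); and (ii) the diagonalization/rank computation of the quadratic form $Q(x) = a x \Tr(x)$ over $\fqn$, viewed as a quadratic form in $ns$ variables over $\fq$ — this is presumably carried out in Section~\ref{sec3} and yields its rank and its discriminant, which in turn control the evaluation of the associated character sum via the standard formula for $\sum \psi(Q(x))$.

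Concretely, the first step is to write $N$ as a character sum of the shape $\sum_{t\in\fqn^*}\chi_2(t)\,S(t)$, where $S(t) = \#\{x : a x\Tr(x) + b = t\} $ is governed by the quadratic form, or equivalently to split according to whether $\Tr(x) = 0$ or not. On the hyperplane $\Tr(x)=0$ the equation reduces to $y^d = b$, contributing a clean Fermat-type term; off the hyperplane we substitute and are left with a one-variable sum over $\fq$ (after fixing the line $\fq\cdot$something) which is again a quadratic Gauss sum, this time over $\fq$, explaining the appearance of $\tau^s q^{(n-1)/2}$ and $\tau^s q^{(n-2)/2}\cdot q$ type terms and the split on the parity of $n$. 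The second step is the bookkeeping of the sign and discriminant factors: tracking $\chi_2(-a)$ (from completing the square, the $-a$ or $-1/a$ showing up as the discriminant contribution), $\chi_2(-aB)$ (from the affine shift by $b$, i.e. $B = \Tr(b/a)$ measuring how the constant interacts with the radical of the form), and the powers of $\tau$. Case (1) is $B=0$, where the form $a x \Tr(x)$ restricted appropriately is \emph{degenerate in one extra direction}, so one power of $q$ is gained and the single surviving Gauss sum is over either $\fqn$ (n even) or an $(n{-}1)$-dimensional space (n odd); case (2) is $B\neq 0$, where the affine term is in general position and one gets the two-term answer $\chi_2(-a)\tau^{2s}q + \chi_2(-aB)$ (resp. $\tau^s(\chi_2(-a)+\chi_2(-aB))$).

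The main obstacle I expect is the precise determination of the rank and discriminant of the quadratic form $x\mapsto ax\Tr(x)$ over $\fq$ as $n$ varies, and in particular handling the parity of $n$ correctly: when $n$ is even the form behaves like a non-degenerate hyperbolic-type form in an even number of variables (giving a Gauss sum that is a pure power of $q$ times $\chi_2$ of the discriminant, hence the $\tau^{2s}$ which is just $\pm 1$), whereas when $n$ is odd one has an odd-dimensional piece producing a genuine $\tau^s q^{1/2}$-type Gauss sum. Getting the normalization of $g(\chi_2)$ over $\fqn$ versus over $\fq$ consistent — i.e.\ that $g(\chi_2 \circ \mathrm{N}_{\fqn/\fq}) $ and the Davenport--Hasse relation match the claimed $\tau^{ns}$ — and then reconciling all the $\chi_2(-a)$, $\chi_2(-aB)$ signs with the sign conventions of Proposition~\ref{termsofGauss}, is the delicate computational heart; everything else is substitution and summing a finite geometric-type series. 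Once the form is diagonalized and the two Gauss sums (one over $\fqn$, one over $\fq$) are evaluated, the stated formulas follow by collecting terms.
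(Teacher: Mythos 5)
Your proposal follows essentially the same route as the paper: specialize Proposition~\ref{termsofGauss} to $d=2$ (where $v=\gcd\bigl(2,\tfrac{q^n-1}{q-1}\bigr)$ gives $D=1$ for $n$ even and $D=2$ for $n$ odd), note that the $\ell$-sum collapses to the single term $q^{n-1}\chi_2(b)$, and evaluate $N$ using the explicit quadratic Gauss sum $G_n(\chi_2)=-(-1)^{sn}\tau^{sn}q^{n/2}$ together with the quadratic Gauss sum over $\fq$, exactly as the paper does. The one small inaccuracy is your attribution of the parity-of-$n$ split to the rank of the quadratic form (that rank is $1$ or $2$ according to whether $c\in\fq^*$, independently of $n$); the split actually comes from whether $\chi_2$ restricts trivially to $\fq^*$, i.e.\ from $v$, but since you invoke Proposition~\ref{termsofGauss} as stated this does not affect the correctness of the plan.
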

This result allows us to compute the number of rational points in specific curves, as in the following examples.

\begin{example} 
For the curve
\[\mathcal X_{2,-1,0} : y^2 = -x\Tr(x),\]
it follows that $B = 0$. Therefore, Theorem~\ref{theoremdcase2} provides the number of rational points for a pair $(q,n)$:
$$\begin{tabular}{|c|c|c|c|c|c|}
\hline 
q & n=2 & n=3 &n=4  & n=5 & n=6  \\ \hline
3 &7  & 33 & 87 &225 & 711\\ \hline 
5 & 29 &145  &645  &3225 &15725 \\ \hline 
7 & 43& 385 & 2443& 16513 &117355\\\hline 
9 & 89 &801   & 6633 & 59697   &532089\\\hline 
11 & 111 & 1441 & 14751 & 159841 &1770351\\\hline 
25 & 649 & 16225 & 391225 &9780625  &254281250\\\hline 
\end{tabular}$$
\end{example}
\begin{example}
For the curve
\[\mathcal X_{2,-1,0} : y^2 = -x\Tr(x)+1,\]
it follows that $B = \Tr(-1)=-n$. Therefore, Theorem~\ref{theoremdcase2} provides the number of rational points for a pair $(q,n)$:

$$\begin{tabular}{|c|c|c|c|c|c|}
\hline 
q & n=2 & n=3 &n=4  & n=5 & n=6  \\ \hline
 3& 10 &  42 & 96 &385 & 954\\ \hline
 5 &34 &150 & 720 &  3850& 18600\\ \hline 
7 &64 & 378 &2688 & 19306&134162\\\hline 
9 &80  &  882  &7200 &65448 & 591138\\\hline 
11 & 122 & 1452 & 15840 &175692  &1931402\\\hline 
25 & 625 &16200  & 405600 & 10171250 &253890000\\\hline 
\end{tabular}$$
\end{example}

One can check some values obtained in these two examples by using a computer program such as SageMath. For instance, in the case $q=25$ and  $n=6$ using  a program in SageMath,  spent 37h 34min.
Nevertheless, the run time of the algorithm grows as $q^n$ increases, making the computation unfeasible even for small values of $q^n$ (such as $q^n>e^{20}$).

Back to the problem presented at the introduction, Theorem~\ref{theoremdcase2} is a key tool to compute the number of a special type of elements of $\fqn$, as we see in the proof of the following result.
 
\begin{theorem}\label{teores}
    Let $n$ be an odd positive integer. Then the number of elements $\alpha\in\fqn$ such that $\alpha$ and $\Tr(\alpha)$ are both quadratic residues in $\fqn$ is
    $$\frac{q^n+q^{n-1}+\tau^{s(n-1)}(q-1)q^{\frac{n-1}{2}}+2}{4}.$$
\end{theorem}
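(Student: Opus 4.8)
The plan is to count the elements $\alpha \in \fqn$ for which both $\alpha$ and $\Tr(\alpha)$ are nonzero squares, by translating the condition into a count of points on the curve $\mathcal{X}_{2,1,0}\colon y^2 = x\Tr(x)$. First I would observe that if $\alpha$ and $\Tr(\alpha)$ are both nonzero, then $\alpha\Tr(\alpha)$ is a square in $\fqn$ if and only if $\alpha$ and $\Tr(\alpha)$ are simultaneously squares or simultaneously non-squares. For each $\alpha$ with $\alpha\Tr(\alpha)$ a nonzero square there are exactly two values of $y$ with $y^2 = \alpha\Tr(\alpha)$, so the number of such $\alpha$ equals $\tfrac12\bigl(|\mathcal{X}_{2,1,0}(\fqn)| - Z\bigr)$, where $Z$ counts the points with $y=0$, i.e.\ the $\alpha$ with $\alpha\Tr(\alpha) = 0$. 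Since $n$ is odd and $\alpha \in \fqn$, the map $\alpha \mapsto \Tr(\alpha)$ is surjective onto $\fq$ with each fiber of size $q^{n-1}$, so $\Tr(\alpha) = 0$ for $q^{n-1}$ values of $\alpha$, and $\alpha = 0$ is one of these; thus $Z = q^{n-1} + (q^n - q^{n-1}) \cdot [\,0 \text{ counted once}\,]$ needs to be worked out carefully: the pairs with $\alpha\Tr(\alpha)=0$ are exactly $\alpha = 0$ together with $\alpha$ of nonzero value but zero trace, giving $Z = q^{n-1}$.

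Next I would invoke Theorem~\ref{theoremdcase2} with $d=2$, $a=1$, $b=0$, so that $B = \Tr(0) = 0$ and we are in case~(1). For $n$ odd, the theorem gives
\[
|\mathcal{X}_{2,1,0}(\fqn)| = q^n + q^{n-1}\chi_2(0) + N_1\tau^{ns}\chi_2(-1)(q-1),
\qquad N_1 = \tau^{s} q^{\frac{n-1}{2}}.
\]
Here $\chi_2(0) = 0$ by the usual convention for the extended quadratic character, so the middle term drops. The remaining term is $\tau^{s}q^{\frac{n-1}{2}} \cdot \tau^{ns}\chi_2(-1)(q-1) = \tau^{s(n+1)}\chi_2(-1)(q-1)q^{\frac{n-1}{2}}$, and I would simplify the sign $\tau^{s(n+1)}\chi_2(-1)$ down to $\tau^{s(n-1)}$ using that $\tau^{2s} = (-1)^s = \chi_2(-1)$ when $p \equiv 3 \pmod 4$ and $\tau = 1$, $\chi_2(-1)=1$ when $p \equiv 1 \pmod 4$. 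This yields $|\mathcal{X}_{2,1,0}(\fqn)| = q^n + \tau^{s(n-1)}(q-1)q^{\frac{n-1}{2}}$.

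Having these two ingredients, the count of $\alpha$ with $\alpha\Tr(\alpha)$ a nonzero square is
\[
\tfrac12\bigl(q^n + \tau^{s(n-1)}(q-1)q^{\frac{n-1}{2}} - q^{n-1}\bigr).
\]
These split into the "both squares" and "both non-squares" classes. To extract just the "both squares" count I would set up a second, refined count: let $S$ be the number of nonzero squares in $\fqn$ and partition the $\alpha$ with nonzero $\alpha$ and nonzero $\Tr(\alpha)$ according to the quadratic characters $\chi_2(\alpha), \chi_2(\Tr(\alpha)) \in \{\pm1\}$. I would compute the "mixed" count (one square, one non-square) analogously via the curve $y^2 = g\,x\Tr(x)$ for $g$ a fixed non-square (or directly via a character-sum identity $\sum_\alpha \tfrac{1+\chi_2(\alpha)}{2}\cdot\tfrac{1+\chi_2(\Tr\alpha)}{2}$), and then solve the resulting small linear system together with the total $q^n - q^{n-1}$ of $\alpha$ with $\alpha\Tr(\alpha) \neq 0$. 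The arithmetic should collapse to $\tfrac14\bigl(q^n + q^{n-1} + \tau^{s(n-1)}(q-1)q^{\frac{n-1}{2}} + 2\bigr)$; the "$+2$" will come from correctly bookkeeping the boundary element $\alpha = 0$ and whether $\Tr(\alpha)=0$ cases are excluded, which is where I would be most careful.

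The main obstacle I anticipate is not any single deep step but the careful bookkeeping of degenerate cases: the convention $\chi_2(0)=0$, the elements with $\Tr(\alpha) = 0$ but $\alpha \neq 0$ (there are $q^{n-1}-1$ of them, none of which should be counted since $0$ is not a quadratic residue), and the element $\alpha = 0$ itself. Getting the additive constant exactly right — the "$+2$" in the numerator — requires tracking these edge contributions through both the point-count of $\mathcal{X}_{2,1,0}$ and the square/non-square splitting without double-counting or sign errors in $\tau^{s(n-1)}$. I would organize the proof around the identity expressing the desired count as a double character sum over all $\alpha \in \fqn$ and isolate the "main term" $\tfrac14 q^n$, the trace-fiber correction, and the Gauss-sum term coming from Theorem~\ref{theoremdcase2}, verifying the constant by checking a small case such as $q=3, n=3$ against the value $11$ implied by the formula.
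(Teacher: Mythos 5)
Your overall strategy---reduce the count to $|\mathcal{X}_{2,1,0}(\fqn)|$ via Theorem~\ref{theoremdcase2} and then separate the square/non-square classes with the quadratic character---is essentially the paper's strategy, and your evaluation $|\mathcal{X}_{2,1,0}(\fqn)| = q^n + \tau^{s(n-1)}(q-1)q^{\frac{n-1}{2}}$ is correct. The genuine gap sits exactly where you said you would "be most careful": the degenerate elements. You assert that $0$ is not a quadratic residue and that the $q^{n-1}-1$ elements $\alpha\neq 0$ with $\Tr(\alpha)=0$ should not be counted. Under that convention the quantity you are computing is the number $M'$ of $\alpha$ with $\alpha$ and $\Tr(\alpha)$ both \emph{nonzero} squares, and carrying your own character-sum identity through gives $M'=\tfrac14\bigl(q^n-q^{n-1}+\tau^{s(n-1)}(q-1)q^{\frac{n-1}{2}}\bigr)$, which is not the stated formula. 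The theorem (and the paper's proof) treats $0$ as a square, so the target is $M=M'+K+1$, where $K=\tfrac{q^{n-1}-1}{2}$ is the number of nonzero squares of trace zero (one checks $\sum_{\Tr(x)=0}\chi_2(x)=0$ using that $\chi_2$ restricts to the nontrivial quadratic character of $\fq^*$ for $n$ odd) and the $+1$ is $\alpha=0$; this is precisely where the $+\,q^{n-1}$ and $+\,2$ in the numerator come from. The discrepancy $\tfrac{q^{n-1}+1}{2}$ is a main-order term, not a constant that "collapses," so as written your bookkeeping proves a different statement.

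Two further repairs are needed. First, the linear system you propose (the point counts of $y^2=x\Tr(x)$ and $y^2=g\,x\Tr(x)$ together with the total $q^n-q^{n-1}$) is underdetermined: it yields the sum of the "both squares" and "both non-squares" counts and the mixed count, but not their difference. You must also use $\sum_{\alpha}\chi_2(\alpha)=0$ and $\sum_{\alpha}\chi_2(\Tr(\alpha))=0$---equivalently, expand $\tfrac14\sum_\alpha\bigl(1+\chi_2(\alpha)\bigr)\bigl(1+\chi_2(\Tr(\alpha))\bigr)$ directly, which is what the paper does and which gives $M'$ in one line. Second, your numerical check is miscomputed: for $q=3$, $n=3$ we have $p\equiv 3\pmod 4$, so $\tau^{s(n-1)}=i^2=-1$ and the formula gives $\tfrac{27+9-6+2}{4}=8$, not $11$; running this check correctly would have exposed both the sign and the degenerate-case issue.
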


 In order to compute $N$, we present the following important definition, that is a generalization of a constant used by Wolfmann in \cite{wolfmann1992number} in the study of diagonal equations.
\begin{definition}For $m$ a divisor of $q^n-1$, $a\in\fqn^*$ and $\epsilon\in\{1,-1\}$, we set
 $$\theta_m(a,\epsilon)=\begin{cases}
		m-1,&\text{ if }\chi_m(a)=\epsilon;\\
		-1,&\text{ otherwise.}
	\end{cases}$$
\end{definition}

The following definition, that was introduced in the study of diagonal equations~\cite{oliveira2021diagonal}, will be useful in our results.

\begin{definition}
	Let $r$ be a positive integer. An integer $d>2$ is $(p,r)$-admissible if $d\mid (p^r+1)$ and there exists no $r'<r$ such that $d\mid (p^{r'}+1)$.
\end{definition}
If $d>2$ is $(p,r)$-admissible, then $2r$ is the multiplicative order of $p$ module $d$. Since $d$ divides $q^n-1$ and $q= p^s$, the condition of $(p,r)$-admissibility on $d$ implies that $2r$ divides $ns$ and, in particular, that $ns$ is an even number. 

\begin{definition}If $d$ is a divisor of $q^n-1$ and is $(p,r)$-admissible we define 
$$\varepsilon=(-1)^{\frac{ns}{2r}}\quad \text{ and } \quad    u=\frac{p^r+1}{d} .$$
\end{definition}
We present now one of our main results, which provides a formula for the number of rational points on $\mathcal X_{d,a,b}$ in the case where $B=0$ and some suitable conditions are required.
\begin{theorem}\label{casebeuquals0}
Let $d>2$ be an integer,
 $a,b \in \fqn$ such that $B=0$ and $N=N_{d,a,b,n}$ defined as in Equation \eqref{N_dabn}. The following holds:
	\begin{enumerate}
		\item If $v$ is $(p,r)$-admissible and $D$ is odd, then
		$$N=\begin{cases}
			0,&\text{ if }v=1;\\
		\tau^{sn}\chi_2(a) (q-1)q^{\frac{n-2}{2}},&\text{ if }v=2;\\
			\varepsilon\theta_{v}(-a,\varepsilon^{u})(q-1)q^{\frac{n-2}2},&\text{ if }v>2.\\
		\end{cases}$$
		\item If $2v$ is $(p,r)$-admissible and $D$ is even.
		\begin{enumerate}
		    \item If $v=1$ then
		    $$N=(-1)^s\tau^{(n+1)s}\chi_2(a)(q-1)q^{\frac{n-1}2}.$$
		    \item If $v=2$ then
		    $$N=\big[(-1)^{s}\tau^s\sqrt{q}\varepsilon^{u\frac{D}2+1}(\chi_4(-a)+\overline{\chi}_4(a))+\tau^{ns}\chi_2(a)\big](q-1)q^{\frac{n-2}2}.$$

\item If $v>2$ then
\[N = \varepsilon\Big[(-1)^s\tau^s\sqrt{q}\chi_{2v}(-a)\varepsilon^{\frac{uD}2}[1+\theta_{v}(-a,1)]+ \theta_{v}(-a,1)\Big](q-1)q^{\frac{n-2}2}.\]
		\end{enumerate}
	\end{enumerate}
\end{theorem}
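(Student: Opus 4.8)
The plan is to start from the Gauss-sum expression for $N_{d,a,b,n}$ provided in Proposition~\ref{termsofGauss} and specialize it to the case $B=\Tr(b/a)=0$. Since $x\mapsto x\Tr(x)$ is a quadratic form over $\fqn$, the level sets of this form can be described explicitly, and the contribution of $b$ enters only through the multiplicative characters $\chi_d^\ell(b)$ appearing in \eqref{N_dabn}; once $B=0$, the remaining sum should organize into a combination of Gauss sums $g(\chi_d^\ell)$ (or $g(\chi_{q^n-1/(q-1)}^\ell)$) over $\fqn$ and quadratic Gauss sums coming from the quadratic form. The first concrete step is therefore to rewrite $N$ as a sum indexed by $\ell=1,\dots,d-1$ of terms that factor through the two ``coordinates'' $v=\gcd(d,\tfrac{q^n-1}{q-1})$ and $D=d/v$: the index $\ell$ splits according to whether $v\mid\ell$ or not, which is exactly the dichotomy between the part of the character that is trivial on $\fq^*$ and the part that is not.

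Next I would invoke the $(p,r)$-admissibility hypothesis. The point of assuming $v$ (resp.\ $2v$) is $(p,r)$-admissible is that Stickelberger/Gauss-sum evaluations become available: when $\ord_d(p)=2r$ the Gauss sum $g(\chi_d)$ over the relevant field is (up to an explicit root of unity and sign $\varepsilon=(-1)^{ns/2r}$) a real power of $p$, via the classical theorem on Gauss sums for ``semi-primitive'' characters (as used by Wolfmann in \cite{wolfmann1992number} for diagonal equations). So the second step is: for each $\ell$, evaluate $g(\chi_v^\ell)$ explicitly using semi-primitivity, keeping careful track of the factor $\varepsilon^{u\ell}$ where $u=(p^r+1)/v$, and of whether $\chi_v^\ell$ is quadratic or not. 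Summing over $\ell$ then produces the constants $\theta_v(-a,\pm1)$ by definition of $\theta_m$ — the sum $\sum_\ell \chi_v^\ell(-a)\varepsilon^{u\ell}$ collapses to $v-1$ or $-1$ according to whether $\chi_v(-a)=\varepsilon^u$. The case split on $v\in\{1,2\}$ versus $v>2$ is because for $v=1$ there is no nontrivial character contribution (giving $N=0$ or, when $D$ is even, only the quadratic piece surviving), and for $v=2$ the character $\chi_2$ is its own inverse so the quadratic Gauss sum $\tau^{sn}$ and the $\chi_4$ terms must be handled separately.

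The third ingredient, needed when $D$ is even, is the extra factor of $\sqrt q$ and the $\chi_4$ (or $\chi_{2v}$) terms: these come from the fact that when $D$ is even the relevant multiplicative order forces an additional quadratic twist, so one picks up a product of a quadratic Gauss sum over $\fq$ (contributing $(-1)^s\tau^s\sqrt q$, by the standard evaluation of the quadratic Gauss sum) with the semi-primitive Gauss sum over $\fqn$. Matching signs here — reconciling $\tau$, $(-1)^s$, $\varepsilon$, and the exponents $u$, $uD/2$ — will be the fussiest bookkeeping. I expect the main obstacle to be precisely this sign/root-of-unity accounting: ensuring the formula holds uniformly, with the correct powers of $\tau$ and $\varepsilon$ in each of the five sub-cases, and in particular verifying the identity $\chi_{2v}(-a)\varepsilon^{uD/2}[1+\theta_v(-a,1)]$ genuinely packages the two possibilities $\chi_v(-a)=1$ versus $\chi_v(-a)\neq1$ with the right $\chi_4$-value of $-a$. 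Everything else reduces to assembling $N$ from \eqref{N_dabn}, the quadratic-form count, and the cited Gauss-sum evaluations; the admissibility hypothesis is exactly what makes all the Gauss sums explicitly computable rather than merely bounded.
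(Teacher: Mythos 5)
Your proposal follows essentially the same route as the paper: specialize Proposition~\ref{termsofGauss} to $B=0$, evaluate the surviving Gauss sums via the purity criterion for $(p,r)$-admissible orders (Theorem~\ref{item405}) together with the quadratic Gauss sum evaluation, and collapse the resulting geometric sums into the $\theta_v$ constants, splitting on the parity of $D$ and on $v\in\{1,2\}$ versus $v>2$. One small correction: the character $\chi_d^\ell$ is trivial on $\fq^*$ exactly when $D\mid\ell$ (not $v\mid\ell$), since the restriction $\eta_D$ has order $D$; with that index fixed, the bookkeeping you outline is precisely what the paper carries out.
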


By employing this result, we can obtain a simple expression for the number of rational points in the case where $a,b$ and $d$ satisfy some restrictions.
\begin{corollary}
    If $d>2$ is a $(p,r)$-admissible divisor of $\tfrac{q^n-1}{q-1}$ and $ns/2r$ is even, then
    $$N_{d,-1,0,n}=(d-1)(q-1)q^{\frac{n-2}{2}}.$$
\end{corollary}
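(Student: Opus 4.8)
The plan is to derive this corollary directly from Theorem~\ref{casebeuquals0}, by checking that the hypotheses specialize the formulas there in the cleanest possible way. First I would observe that under the corollary's hypothesis $d$ divides $\tfrac{q^n-1}{q-1}$, so by Definition~\ref{vD} we have $v=\gcd\big(d,\tfrac{q^n-1}{q-1}\big)=d$ and hence $D=d/v=1$, which is odd. Thus we are in case (1) of Theorem~\ref{casebeuquals0}. Moreover the choice $a=-1$ gives $-a=1$, and the choice $b=0$ gives $B=\Tr(b/a)=0$, so the standing assumption $B=0$ of that theorem is met. Since $d>2$ we also have $v=d>2$, so we land in the third subcase of case (1), namely
$$N_{d,-1,0,n}=\varepsilon\,\theta_{v}(-a,\varepsilon^{u})(q-1)q^{\frac{n-2}2}.$$

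Next I would evaluate the two remaining quantities. Because $-a=1$, we have $\chi_v(-a)=\chi_v(1)=1$ for any multiplicative character $\chi_v$ of order $v$, so the value of $\theta_v(-a,\epsilon)$ depends only on whether $\epsilon=1$: by the definition of $\theta_m$, $\theta_v(1,\epsilon)=v-1=d-1$ if $\epsilon=1$ and $\theta_v(1,\epsilon)=-1$ otherwise. Here $\epsilon=\varepsilon^{u}$, where $\varepsilon=(-1)^{ns/2r}$ and $u=(p^r+1)/d$. The hypothesis that $ns/2r$ is even forces $\varepsilon=1$, and therefore $\varepsilon^{u}=1$ regardless of $u$; likewise the prefactor $\varepsilon=1$. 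Substituting $\varepsilon=1$ and $\theta_v(1,1)=d-1$ into the displayed formula yields $N_{d,-1,0,n}=(d-1)(q-1)q^{\frac{n-2}{2}}$, as claimed.

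The only genuine point to be careful about is the interplay of hypotheses needed to invoke Theorem~\ref{casebeuquals0}(1): that theorem requires $v$ to be $(p,r)$-admissible, and the corollary's phrasing ``$d$ is a $(p,r)$-admissible divisor'' together with $v=d$ supplies exactly this. One should also note in passing that $(p,r)$-admissibility of $d$ guarantees $2r\mid ns$ (as remarked after the definition of admissibility), so the ratio $ns/2r$ is a well-defined integer and the parity condition ``$ns/2r$ is even'' makes sense; this is the same condition that makes $\varepsilon=1$. I do not anticipate a real obstacle here — the proof is essentially a bookkeeping verification that the corollary's hypotheses pin down the case $v=d>2$, $D=1$ odd, $\varepsilon=1$, $\chi_v(-a)=1$ in Theorem~\ref{casebeuquals0}, after which the stated value of $N$ drops out immediately.
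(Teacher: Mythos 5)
Your proposal is correct and is exactly the intended derivation: the paper gives no separate proof, and the corollary follows from Theorem~\ref{casebeuquals0}(1) with $v=d>2$, $D=1$, $B=0$, $\chi_v(-a)=\chi_v(1)=1$ and $\varepsilon=1$, just as you argue. Your side remarks (that $d\mid\tfrac{q^n-1}{q-1}$ forces $v=d$, and that admissibility guarantees $2r\mid ns$ so the parity hypothesis is meaningful) are the right points to check.
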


Results similar to Theorem~\ref{casebeuquals0} can be obtained for the case $B\neq 0$, as we will see in Section~\ref{sec7}.

\section{Preliminary results}\label{sec3}
In this section, we provide some definitions and results that will be useful in the paper. Along the text, $\psi$ and $\tilde \psi$ denote the canonicals additive characters of $\fqn$ and $\fq$, respectively, i.e.,  $$\psi(x)=\exp\left(\tfrac {2\pi i\, \Tr_{\F_{q^n}/\F_p}(x)}p \right)\quad\text{and}\quad  \tilde\psi(x)=\exp\left(\tfrac {2\pi i\, \Tr_{\F_{q}/\F_p}(x)}p \right).$$
We use $\chi_{q^n-1}$ to denote a fixed primitive multiplicative character of $\fqn^* $ and, for $m$ a divisor of $q^n-1$, $\chi_m$ denotes the multiplicative character of order $m$ defined by $\chi_m = \chi_{q^n-1}^{(q^n-1)/m}$. The restriction of $\chi_m$ to $\fq^*$ is a multiplicative character of $\fq^*$ of order $M= \frac{m}{\gcd(m,(q^n-1)/(q-1))}$ and it will be denoted by $\eta_M$. 
\begin{definition}  For multiplicative characters $\chi$ of $\fqn^*$ and $\eta$ of $\fq^*$, the Gauss sum of $\chi$ and $\eta$ are the sums
\[G_n(\chi) = \sum_{x \in \fqn^*} \chi(x)\psi(x) \quad \text{ and } \quad  G_1(\eta) = \sum_{x \in \fq^*} \eta(x)\tilde\psi(x),\]
respectively.
\end{definition}

We present now some techinical results that are used to  compute the number of rational points on $\mathcal X_{d,a,b}$. Most of them are well-known and can be easily found in the literature.
\begin{lemma}[{{\cite[Theorem $5.4$]{Lidl}}}]\label{item433}
    Let $\chi$ be a multiplicative character of $\fqn$. Then
    $$\sum_{c\in\fqn}\chi(c)=\begin{cases}
        0, &\text{ if }\chi\text{ is nontrivial;}\\
        q^n, &\text{ if }\chi\text{ is trivial.}\\
    \end{cases}$$
\end{lemma}

\begin{lemma}\cite[Theorem 5.4]{Lidl}\label{sumlemm}
	For $u\in\fqn$, we have that
	$$\frac{1}{q^n}\sum_{c\in\fqn} \psi(uc)=\begin{cases}
	0, & \text{ if } u \neq 0;\\
	1,& \text{ if } u = 0.
	\end{cases}$$
\end{lemma}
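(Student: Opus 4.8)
The plan is to treat the two cases of the statement separately: the case $u=0$ is immediate by direct evaluation, and the case $u\neq 0$ reduces, via two bijections of $\fqn$, to the single fact that a nontrivial additive character sums to zero over the whole field.

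First I would dispose of the case $u=0$. Here $\psi(uc)=\psi(0)=1$ for every $c\in\fqn$, so $\sum_{c\in\fqn}\psi(uc)$ merely counts the $q^n$ elements of $\fqn$; dividing by $q^n$ gives $1$, as claimed. For $u\neq 0$, the key reduction is that multiplication by $u$ is a bijection of $\fqn$, so that $\sum_{c\in\fqn}\psi(uc)=\sum_{x\in\fqn}\psi(x)=:S$, and it suffices to prove $S=0$. To do this I would use that $\psi$ is nontrivial: since $\psi(x)=\exp\!\big(2\pi i\,\Tr_{\F_{q^n}/\F_p}(x)/p\big)$ and the absolute trace is a nonzero $\F_p$-linear functional, there exists $a\in\fqn$ with $\Tr_{\F_{q^n}/\F_p}(a)\not\equiv 0\pmod p$, hence $\psi(a)\neq 1$. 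Invoking additivity $\psi(a+x)=\psi(a)\psi(x)$ together with the fact that $x\mapsto a+x$ permutes $\fqn$, I obtain
$$\psi(a)\,S=\sum_{x\in\fqn}\psi(a)\psi(x)=\sum_{x\in\fqn}\psi(a+x)=\sum_{y\in\fqn}\psi(y)=S.$$
Thus $(\psi(a)-1)S=0$, and since $\psi(a)\neq 1$ we conclude $S=0$, completing the case $u\neq 0$.

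The only point requiring any care is verifying that $\psi$ is genuinely nontrivial, that is, that some $a$ with $\psi(a)\neq 1$ exists; this is precisely the nonvanishing of the absolute trace functional, which holds because $\Tr_{\F_{q^n}/\F_p}$ is a surjective $\F_p$-linear map onto $\F_p$. Everything else is a routine application of the two bijections $c\mapsto uc$ and $x\mapsto a+x$. I remark that this argument is the exact additive analogue of the multiplicative orthogonality recorded in Lemma~\ref{item433}, with the nontrivial shift $a$ here playing the role that a nontrivial character plays there.
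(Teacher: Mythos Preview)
Your proof is correct. The paper does not actually give its own proof of this lemma but merely cites it from Lidl and Niederreiter, and the argument you wrote out---the trivial case $u=0$, the bijection $c\mapsto uc$ reducing to $S=\sum_{x}\psi(x)$, and the shift trick $\psi(a)S=S$ with a nontrivial value $\psi(a)\neq 1$---is precisely the standard proof one finds in that reference.
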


\begin{lemma}[{{\cite[Equation $5.4$, p. 189]{Lidl}}}]\label{dpower}
Let $d$ be a divisor of $q^n-1$. If $c \in\fqn$, then
\[\sum_{j=0}^{d-1} \chi_d^j(c) = \begin{cases}
    1, & \text{ if } c=0;\\
	d, & \text{ if } c \text{ is a } d \text{-power in } \fqn^*;\\
	0,& \text{ otherwise}.
\end{cases}\]
\end{lemma}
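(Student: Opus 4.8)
The plan is to reduce everything to a finite geometric sum. First, treat the degenerate case $c=0$ separately: the only nonzero summand is the one for $j=0$, since $\chi_d^0$ is the trivial character (which takes the value $1$ at $0$ under the convention in force) whereas $\chi_d^j(0)=0$ for $1\le j\le d-1$ because each such $\chi_d^j$ is a nontrivial multiplicative character; hence the sum equals $1$, as claimed.

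Next, assume $c\in\fqn^*$ and put $\zeta=\chi_d(c)$. Since $\chi_d$ has order $d$, we have $\zeta^d=\chi_d^d(c)=1$, so $\zeta$ is a $d$-th root of unity and $\sum_{j=0}^{d-1}\chi_d^j(c)=\sum_{j=0}^{d-1}\zeta^j$. The elementary identity for a geometric progression then gives the value $d$ when $\zeta=1$ and the value $(\zeta^d-1)/(\zeta-1)=0$ when $\zeta\neq 1$. Thus the lemma follows once we know that $\chi_d(c)=1$ if and only if $c$ is a $d$-th power in $\fqn^*$.

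For this last equivalence I would use that $\fqn^*$ is cyclic of order $q^n-1$ with $d\mid q^n-1$: the subgroup $H$ of $d$-th powers then has index $d$, and clearly $H\subseteq\ker\chi_d$; conversely, a multiplicative character of a cyclic group has kernel of index equal to its order, so $\ker\chi_d$ also has index $d$. Comparing cardinalities forces $H=\ker\chi_d$, which is exactly the claim, and assembling the three cases finishes the proof. The argument is entirely routine; the only mild subtlety is the convention assigning the value $1$ to the trivial character at $0$, so that the stated formula reads correctly in the case $c=0$, and no genuine obstacle is anticipated.
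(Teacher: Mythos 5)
Your argument is correct: the paper does not prove this lemma but simply cites it from Lidl--Niederreiter, and your proof is exactly the standard one (separating $c=0$ under the convention that the trivial character takes the value $1$ there, reducing to a geometric sum of the root of unity $\chi_d(c)$, and identifying $\ker\chi_d$ with the subgroup of $d$-th powers in the cyclic group $\fqn^*$ by comparing indices). Nothing is missing.
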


\begin{lemma}\cite[Theorems 5.11 and 5.12]{Lidl}\label{gausslemm} 
	Let $\chi_0$ denote the trivial multiplicative character of $\fqn^*$. If $\chi\neq\chi_0$ is a multiplicative character of $\fqn^*$, then
	\begin{enumerate}
		\item $G_n(\chi_0)=-1;$
		\item $|G_n(\chi)|=q^{n/2};$
		\item $G_n(\chi)G_n(\overline{\chi})=\chi(-1)q^n$.
	\end{enumerate}
\end{lemma}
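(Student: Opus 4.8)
The statement collects three standard facts about Gauss sums, so the plan is to establish each in turn directly from the orthogonality relations recorded in Lemmas~\ref{item433} and~\ref{sumlemm}, reserving the absolute-value computation as the technical core, from which the third identity will then follow almost for free.

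First I would dispose of part (1). Since $\chi_0(x)=1$ for every $x\in\fqn^*$, the defining sum collapses to $G_n(\chi_0)=\sum_{x\in\fqn^*}\psi(x)$. Adding and subtracting the $x=0$ term and invoking Lemma~\ref{sumlemm} with $u=1$ to see that $\sum_{x\in\fqn}\psi(x)=0$, I would conclude $G_n(\chi_0)=0-\psi(0)=-1$.

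The heart of the argument is part (2). The plan is to compute $|G_n(\chi)|^2 = G_n(\chi)\,\overline{G_n(\chi)}$ by expanding it into a double sum over $x,y\in\fqn^*$, using $\overline{\psi(y)}=\psi(-y)$ and $\overline{\chi(y)}=\overline{\chi}(y)$, which gives $\sum_{x,y}\chi(x)\overline{\chi}(y)\psi(x-y)$. The key manipulation is the substitution $x=yz$ with $z\in\fqn^*$, which rewrites the summand as $\chi(z)\psi\bigl(y(z-1)\bigr)$ and decouples the two variables. Summing over $y$ first, I would apply Lemma~\ref{sumlemm}: the inner sum equals $q^n-1$ when $z=1$ and $-1$ when $z\neq1$. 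What remains is $(q^n-1)\chi(1)-\sum_{z\neq1}\chi(z)$, and since $\chi$ is nontrivial, Lemma~\ref{item433} gives $\sum_{z\in\fqn^*}\chi(z)=0$, hence $\sum_{z\neq 1}\chi(z)=-1$. Collecting terms yields $|G_n(\chi)|^2=q^n$, that is, $|G_n(\chi)|=q^{n/2}$. Finally, for part (3) I would relate $G_n(\overline{\chi})$ to $\overline{G_n(\chi)}$ by the change of variable $y\mapsto -y$ in the defining sum, obtaining $G_n(\overline{\chi})=\overline{\chi}(-1)\,\overline{G_n(\chi)}$; since $\chi(-1)=\pm1$ one has $\overline{\chi}(-1)=\chi(-1)$, so $G_n(\chi)G_n(\overline{\chi})=\chi(-1)\,G_n(\chi)\overline{G_n(\chi)}=\chi(-1)\,|G_n(\chi)|^2=\chi(-1)q^n$.

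The only step demanding genuine care is the double-sum evaluation in part (2); everything else is bookkeeping. In particular, the separation of variables via $x=yz$ is legitimate precisely because $y$ ranges over $\fqn^*$, and the two orthogonality lemmas—additive for the $y$-sum, multiplicative for the $z$-sum—must be applied to exactly the right nontrivial characters, which is where a stray sign or a missing boundary term ($z=1$ versus $z\neq1$) would most easily creep in.
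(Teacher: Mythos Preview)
Your argument is correct in every part; the three standard steps (trivial-character evaluation, the $x=yz$ decoupling followed by additive and multiplicative orthogonality, and the conjugation identity $G_n(\overline{\chi})=\chi(-1)\overline{G_n(\chi)}$) are exactly the textbook route. The paper itself supplies no proof at all---it merely cites \cite[Theorems~5.11 and~5.12]{Lidl}---so there is nothing to compare against beyond noting that what you have written is essentially the proof found in that reference.
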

A Gauss sum is said to be  \textit{pure} if some positive integer power of it is real. The following result present necessary and sufficient conditions for a Gauss sum to be pure.
\begin{theorem}\cite[Theorem 1]{evans1981pure}\label{item405} 
	Given a divisor $d>2$ of $q^n-1$ and a multiplicative character $\chi_d$ of $\mathbb{F}_{q^n}^*$ with order $d$, the following are equivalents:
	\begin{itemize}
		\item there exists $r$ such that $d\mid (p^r+1)$;
		\item $G_n(\chi_d^j)$ is pure for all $j\in\Z$;
		\item there exists a positive integer $r$ such that $d\mid (p^r+1)$,  $2r\mid ns$ and $$G_n(\chi_d^j)=-(-1)^{ns(uj+1)/2r} q^{n/2}$$
		for all $j\not\equiv 0\pmod{d}$, where $u=\tfrac{p^r+1}{d}$.
	\end{itemize}
\end{theorem}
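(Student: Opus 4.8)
The plan is to establish the three stated conditions as a cycle of implications, writing (A) for the existence of $r$ with $d\mid p^r+1$, (B) for the purity of all $G_n(\chi_d^j)$, and (C) for the explicit evaluation in the third bullet. The implication (C)$\Rightarrow$(B) is immediate: the formula exhibits each $G_n(\chi_d^j)$ with $j\not\equiv0\pmod d$ as $\pm q^{n/2}\in\mathbb{R}$, so already its first power is real, while the remaining case $j\equiv0\pmod d$ gives the trivial character, handled by Lemma~\ref{gausslemm}(1). Likewise (C)$\Rightarrow$(A) is trivial, since (C) itself asserts the existence of such an $r$. Thus the substance lies in the two implications (A)$\Rightarrow$(C) and (B)$\Rightarrow$(A).

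For (A)$\Rightarrow$(C) I would use the semiprimitive evaluation. Fix the least $r$ with $d\mid p^r+1$; the $(p,r)$-admissibility discussion preceding the statement shows $2r\mid ns$, so write $ns=2rt$. The first step is to evaluate the Gauss sum over the small field $\F_{p^{2r}}$: for a character $\lambda$ of order $d$ there, the congruence $p^r\equiv-1\pmod d$ gives $\lambda^{p^r}=\overline\lambda$, so the automorphism $\sigma$ of $\mathbb{Q}(\zeta_p,\zeta_d)$ sending $\zeta_d\mapsto\zeta_d^{p^r}=\zeta_d^{-1}$ and $\zeta_p\mapsto\zeta_p^{p^r}$ fixes the Gauss sum $G(\lambda)$. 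Since $\sigma$ restricts to complex conjugation on $\mathbb{Q}(\zeta_d)$, this forces $G(\lambda)\in\mathbb{R}$, and then the absolute-value computation of Lemma~\ref{gausslemm}(2) (over $\F_{p^{2r}}$) pins it down to $\pm p^r$, the sign being read off from Stickelberger's congruence. The second step lifts to $\fqn$ via the Davenport--Hasse relation: writing $\chi_d^j=\lambda^j\circ\N$ for the norm $\N\colon\fqn\to\F_{p^{2r}}$, one has $G_n(\chi_d^j)=(-1)^{t-1}G(\lambda^j)^t$. Substituting the base-field value and simplifying the resulting power of $-1$ produces exactly $-(-1)^{ns(uj+1)/2r}q^{n/2}$, with $u=(p^r+1)/d$.

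For the converse (B)$\Rightarrow$(A), the natural tool is Stickelberger's theorem on the prime factorization of Gauss sums. Fixing a prime $\mathfrak P$ of $\mathbb{Q}(\zeta_p,\zeta_d)$ above $p$, Stickelberger expresses $v_{\mathfrak P}\big(G_n(\chi_d^j)\big)$ through sums of base-$p$ digits of multiples of $j$; purity of $G_n(\chi_d^j)$---some power being real, equivalently $G_n(\chi_d^j)/q^{n/2}$ being a root of unity and hence fixed up to sign by complex conjugation---translates into the statement that these valuations are invariant under the relevant Galois action, i.e.\ that complex conjugation on $\mathbb{Q}(\zeta_d)$ lies in the decomposition group $\langle\mathrm{Frob}_p\rangle$. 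That containment is precisely the assertion $-1\in\langle p\rangle$ in $(\Z/d\Z)^*$, namely $p^r\equiv-1\pmod d$ for some $r$, which is (A). I expect this converse to be the main obstacle: the forward direction is a clean Davenport--Hasse computation, whereas extracting the semiprimitivity of $p$ modulo $d$ from mere purity requires the full strength of the Stickelberger digit-sum description and a careful analysis of when those digit sums can remain balanced under conjugation.
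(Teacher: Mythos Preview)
The paper does not supply its own proof of this theorem: it is quoted verbatim from Evans~\cite{evans1981pure} as a preliminary tool, with no argument given. So there is nothing in the paper to compare your proposal against.

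That said, your outline is sound and tracks the classical argument. The implication (A)$\Rightarrow$(C) is exactly the semiprimitive evaluation: reality of the base Gauss sum over $\F_{p^{2r}}$ follows from $p^r\equiv-1\pmod d$ as you describe, its sign is fixed by a Stickelberger-type congruence, and Davenport--Hasse lifts the result to $\fqn$. Your sketch of (B)$\Rightarrow$(A) via Stickelberger and the decomposition group is also the right idea, and you are correct that this is the direction requiring real work; the precise mechanism is that purity forces the $\mathfrak P$-adic valuations of $G_n(\chi_d^j)$ to be constant across all primes above $p$, which via the Stickelberger digit-sum formula forces $-1\in\langle p\rangle\subset(\Z/d\Z)^*$. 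One small point to tighten: in (A)$\Rightarrow$(C) you invoke ``the $(p,r)$-admissibility discussion preceding the statement'' to get $2r\mid ns$, but that discussion in the paper appears \emph{after} the theorem and for a different purpose; the divisibility is simply because $d\mid p^{ns}-1$ forces $\mathrm{ord}_d(p)\mid ns$, and $\mathrm{ord}_d(p)=2r$ when $r$ is minimal with $d\mid p^r+1$.
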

 
\begin{theorem}\cite[Theorem 5.15]{Lidl}\label{gaussdois}
	Let $\chi_2$ be the quadratic character of $\fqn^*$. Then
	$$G_n(\chi_2)=-(-1)^{sn}\tau^{sn}q^{n/2}.$$
\end{theorem}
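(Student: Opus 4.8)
The plan is to evaluate $G_n(\chi_2)$ by reducing it, via the Hasse--Davenport lifting relation, to the classical quadratic Gauss sum over the prime field $\F_p$, whose exact value is Gauss's celebrated sign evaluation. First I would record the two compatibility facts that make the reduction legitimate. Writing $[\,\fqn:\F_p\,]=sn$, the canonical additive character factors as $\psi=\psi_p\circ\Tr_{\fqn/\F_p}$, where $\psi_p(x)=\exp(2\pi i x/p)$ is the canonical additive character of $\F_p$; and the quadratic character $\chi_2$ of $\fqn^*$ is the lift $\chi_2=\eta\circ\N_{\fqn/\F_p}$ of the Legendre character $\eta$ of $\F_p^*$. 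Indeed, $\N_{\fqn/\F_p}$ is surjective and $\eta$ has order $2$, so $\eta\circ\N_{\fqn/\F_p}$ is a nontrivial character whose square is trivial, hence it is the unique order-$2$ character of $\fqn^*$.

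With these identifications in place, the Hasse--Davenport relation for characters lifted from $\F_p$ to $\fqn$ yields
$$G_n(\chi_2)=(-1)^{sn-1}\,G^{\,sn},\qquad G:=\sum_{x\in\F_p^*}\eta(x)\psi_p(x),$$
so that everything is reduced to the evaluation of the single quadratic Gauss sum $G$ over $\F_p$. (Equivalently, one could descend in two steps, first from $\fqn$ to $\fq$ using the sum $G_1$ and character $\tilde\psi$ of the paper, then from $\fq$ to $\F_p$; the direct descent to $\F_p$ is cleaner.)

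Next I would pin down $G$. Its modulus and square are formal: by the $\F_p$-analogues of Lemma~\ref{gausslemm}(2)--(3), and since $\overline\eta=\eta$, one gets $|G|=\sqrt p$ and $G^2=\eta(-1)\,p$ with $\eta(-1)=(-1)^{(p-1)/2}$. The delicate input is the exact argument of $G$, namely that $G=\tau\sqrt p$ with $\tau=1$ for $p\equiv1\pmod4$ and $\tau=i$ for $p\equiv3\pmod4$. This is precisely Gauss's classical sign determination: after rewriting $G=\sum_{x\in\F_p}\psi_p(x^2)$, one evaluates this exponential sum by a standard method (for instance Schur's eigenvalue computation for the associated discrete-Fourier matrix, or a theta-function/contour argument) to obtain the stated value of $\tau$. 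I regard this as the genuine obstacle of the whole statement: the relations $|G|=\sqrt p$ and $G^2=\eta(-1)p$ come for free, but fixing the sign $\tau$ rather than merely $\tau^2=\eta(-1)$ requires the hard classical argument and cannot be recovered from the magnitude alone.

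Finally I would substitute $G=\tau\sqrt p$ into the Hasse--Davenport identity and simplify:
$$G_n(\chi_2)=(-1)^{sn-1}(\tau\sqrt p)^{sn}=(-1)^{sn-1}\tau^{sn}p^{sn/2}=-(-1)^{sn}\tau^{sn}q^{n/2},$$
where I use $p^{sn/2}=(p^{sn})^{1/2}=(q^n)^{1/2}=q^{n/2}$ and $(-1)^{sn-1}=-(-1)^{sn}$. This is exactly the asserted value of $G_n(\chi_2)$.
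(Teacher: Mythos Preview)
Your proof is correct. The paper does not supply its own proof of this statement: it is quoted as \cite[Theorem~5.15]{Lidl} and used as a black box. Your argument---reduce to the prime field via Hasse--Davenport and then invoke Gauss's sign determination $G=\tau\sqrt{p}$---is exactly the standard route (and is, in fact, how Lidl--Niederreiter prove their Theorem~5.15, combining their Davenport--Hasse Theorem~5.14 with the prime-field evaluation). So there is nothing to contrast; you have simply supplied the proof the paper omits.
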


\begin{corollary}\label{corolchartwo}
	Let $\chi_2$ be the quadratic character of $\fqn^*$. If $ns$ is even, then
	$$G_n(\chi_2)=-(-1)^{ns(u+1)/2} q^{n/2},$$
	where $u=\tfrac{p+1}{2}.$
\end{corollary}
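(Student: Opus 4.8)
The plan is to obtain the corollary directly from Theorem~\ref{gaussdois}, which already evaluates the quadratic Gauss sum as $G_n(\chi_2) = -(-1)^{sn}\tau^{sn}q^{n/2}$, with $\tau$ as in Definition~\ref{vD}. Since the two expressions for $G_n(\chi_2)$ differ only in a sign, it suffices to prove the numerical identity
$$(-1)^{sn}\tau^{sn} = (-1)^{ns(u+1)/2},$$
valid whenever $ns$ is even, where $u=(p+1)/2$. I would establish this by a case split on the residue of $p$ modulo $4$, after which multiplying through by $-q^{n/2}$ gives the claim.

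First, suppose $p\equiv 1\pmod 4$, so $\tau=1$. Then the left-hand side equals $(-1)^{sn}=1$, since $ns$ is even. On the other hand $p+1\equiv 2\pmod 4$, hence $u$ is odd and $u+1$ is even; writing $ns=2k$ we get $ns(u+1)/2 = k(u+1)$, which is even, so the right-hand side is also $1$.

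Next, suppose $p\equiv 3\pmod 4$, so $\tau=i$. Because $ns$ is even, $\tau^{sn}=i^{sn}=(-1)^{sn/2}$, and together with $(-1)^{sn}=1$ the left-hand side becomes $(-1)^{sn/2}$. Now $p+1\equiv 0\pmod 4$, so $u$ is even and $u+1$ is odd; therefore $ns(u+1)/2 = (ns/2)(u+1)$ has the same parity as $ns/2$, and the right-hand side is $(-1)^{ns/2}$ as well. This proves the identity in both cases and hence the corollary.

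There is no genuine obstacle here; the only point needing care is the parity bookkeeping — namely observing that the parity of $u$ is dictated by $p\bmod 4$, and that since $ns$ is even the factor $\tau^{sn}$ collapses to $(-1)^{ns/2}$ precisely when $\tau=i$. One may also read the statement as the assertion that the formula of Theorem~\ref{item405} (taken with $r=1$, for which $2\mid p+1$) persists formally in the excluded value $d=2$, which doubles as a sanity check on the normalization of $u$.
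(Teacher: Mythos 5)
Your proposal is correct and is essentially the paper's own (implicit) argument: the corollary is stated as an immediate consequence of Theorem~\ref{gaussdois}, and the only content is the parity identity $(-1)^{sn}\tau^{sn}=(-1)^{ns(u+1)/2}$ for $ns$ even, which you verify correctly by splitting on $p\bmod 4$. Your closing remark that this is the $d=2$ extension of the formula in Theorem~\ref{item405} with $r=1$ is exactly the point of the corollary's normalization of $u$.
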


%

\subsection{Quadratic forms}

 In order to determine the number of rational points on $\mathcal X_{d,a,b}$ we associate this curve to the quadratic form  $\Tr(x\Tr(x))$. From this quadratic form, we provide its associate matrix and the dimension of its radical. In order to do that, we recall the following definitions.
 \begin{definition}
For a quadratic form  $\Phi:\fqn \to \fq $, the symmetric bilinear form $\varphi:\fqn\times \fqn\to \fq$ associated to  $\Phi$ is  $$\varphi(\alpha,\beta) = \frac{1}2\left(\Phi(\alpha+\beta) -\Phi(\alpha)-\Phi(\beta)\right).$$ 
The radical of the symmetric bilinear form $\Phi: \fqn \to \fq$  is the set
\[\text{rad}(\Phi) = \{\alpha \in \fqn : \varphi(\alpha, \beta) = 0 \text{ for all } \beta\in \fqn \}.\]
If rad($\Phi) = \{0\}$, then $\Phi $ is said to be non-degenerate.
\end{definition}

Let $\mathcal B=\{v_1,\dots, v\}$ be a basis of $\fqn$ over $\fq$. The $n \times n$ matrix $A=(a_{ij})$ defined by 
$$a_{ij}= \begin{cases} \Phi(v_i),& \text{if $i=j$}\\
\frac 12(\Phi(v_i+v_j)-\Phi(v_i)-\Phi(v_j)),& \text{if $i\ne j$}.
\end{cases}
$$
is the {\em associated matrix} of the quadratic form $\Phi$ in the basis $\mathcal B$. In particular, the dimension of  $\text{rad}(\Phi)$  is equal to $n- rank (A).$ 

Let $\Phi: \fq^m \to \fq$ and $\Psi : \fq^n\to \fq$ be quadratic forms where $m \ge n$. Let $U$ and $V$ be associated matrix of $\Phi$ and $\Psi$, respectively. We say that $\Phi$ is equivalent to $\Psi$ if there exists $M \in GL_m(\fq)$ such that 
\[M^T UM =\left(
\begin{array}{c|c}
V & 0 \\ \hline
 0 & 0
\end{array}\right)\in M_m(\fq),\]
where $ GL_m(\fq)$ denotes the set of $m\times m$ invertible matrices over $\fq$ and $M_m(\fq)$ denotes the set of $m\times m $ matrices over $\fq$. 
Furthermore, $\Psi $ is called a {\em reduced form} of $\Phi$ if $\text{rad}(\Psi)= \{0\}. $

The following theorem, which associate quadratic forms and characters sums, will be useful for our results and can be obtained from Theorems $6.26$ and $6.27$ of \cite{Lidl} by a straightforward computation.

\begin{lemma}\label{soma}
Let $U$ be an $n\times n$ non null symmetric matrix over $\fq$ and $l = \rank(U)$. Then there exists $M \in \text{GL}_n(\fq)$ such that $V = MUM^T$ is a diagonal matrix, i.e. $V = \text{diag}(a_1,a_2, \dots, a_l, 0,\dots,0)$ where $a_i \in \fq^*$ for all $i=1,\dots,l$. For the quadratic form 
\[F: \fq^n \to \fq , \quad F(X) = XUX^T \quad \quad X = (x_1, \dots, x_n) \in \fq^n, \]
it follows that
$$\sum_{X \in \fq^n} \tilde\psi\big(F(X)\big)= (-1)^{l(s+1)}\tau^{ls}\eta_2(\delta)q^{n-l/2},$$
where $\delta = a_1 \cdots a_l$. 
\end{lemma}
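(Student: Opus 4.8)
The plan is to diagonalize $U$ and reduce the sum to a product of one‑variable quadratic Gauss sums over $\fq$. Since $p$ is odd we have $\tfrac12\in\fq$, and then the symmetric matrix $U$ is congruent over $\fq$ to a diagonal matrix: orthogonalizing the quadratic form $F$ step by step — at each stage the polarization identity $F(X+Y)-F(X)-F(Y)=2XUY^T$ shows that whenever $F$ is not identically zero on the current subspace it is nonzero at some vector, which is split off, while after $l=\rank(U)$ steps the remaining $(n-l)$‑dimensional subspace is totally isotropic and yields the zero block — one obtains $P\in\text{GL}_n(\fq)$ with $PUP^T$ diagonal, and conjugating by a permutation matrix collects the $l$ nonzero entries into the first $l$ slots. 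This produces $M\in\text{GL}_n(\fq)$ with $V=MUM^T=\text{diag}(a_1,\dots,a_l,0,\dots,0)$, $a_i\in\fq^*$. (This diagonalization together with the evaluation below is essentially the content of Theorems~6.26 and~6.27 of~\cite{Lidl}, which could be invoked directly.)

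Next I would perform the change of variables $Y=XM^{-1}$, which is a bijection of $\fq^n$ and satisfies $F(X)=XUX^T=(YM)U(YM)^T=YVY^T=\sum_{i=1}^{l}a_iy_i^2$. The coordinates $y_{l+1},\dots,y_n$ do not appear, and $\tilde\psi$ is multiplicative, so
\[
\sum_{X\in\fq^n}\tilde\psi\big(F(X)\big)=q^{\,n-l}\prod_{i=1}^{l}\ \sum_{y\in\fq}\tilde\psi\big(a_iy^2\big).
\]
For fixed $i$, since the number of $y\in\fq$ with $y^2=c$ equals $1+\eta_2(c)$, one has $\sum_{y\in\fq}\tilde\psi(a_iy^2)=\sum_{c\in\fq}\tilde\psi(a_ic)+\sum_{c\in\fq}\eta_2(c)\tilde\psi(a_ic)$; the first term vanishes because $a_i\neq0$ (orthogonality of additive characters), and the second equals $\eta_2(a_i)^{-1}G_1(\eta_2)=\eta_2(a_i)G_1(\eta_2)$ after the substitution $c\mapsto a_i^{-1}c$, using that $\eta_2$ has order $2$. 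Hence $\sum_{y\in\fq}\tilde\psi(a_iy^2)=\eta_2(a_i)G_1(\eta_2)$.

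Finally, Theorem~\ref{gaussdois} applied with $n=1$ gives $G_1(\eta_2)=-(-1)^{s}\tau^{s}q^{1/2}=(-1)^{s+1}\tau^{s}q^{1/2}$. Since $\prod_{i=1}^{l}\eta_2(a_i)=\eta_2(\delta)$ with $\delta=a_1\cdots a_l$, and $G_1(\eta_2)^{l}=(-1)^{l(s+1)}\tau^{ls}q^{l/2}$, substituting these into the displayed product yields
\[
\sum_{X\in\fq^n}\tilde\psi\big(F(X)\big)=q^{\,n-l}\,(-1)^{l(s+1)}\tau^{ls}\eta_2(\delta)\,q^{l/2}=(-1)^{l(s+1)}\tau^{ls}\eta_2(\delta)\,q^{\,n-l/2},
\]
which is the claimed identity. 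The argument is essentially bookkeeping; the points needing the most care are the sign $(-1)^{s+1}$ in the quadratic Gauss sum $G_1(\eta_2)$ and the hypothesis that $U$ is non‑null, which guarantees $l\ge1$ so that the product above is nonempty (although the formula holds trivially when $l=0$ as well, both sides being $q^n$). I do not anticipate any genuine obstacle here.
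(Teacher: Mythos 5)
Your proof is correct, and all the constants check out: $G_1(\eta_2)=-(-1)^s\tau^s\sqrt q$ from Theorem~\ref{gaussdois} with $n=1$, so $G_1(\eta_2)^l=(-1)^{l(s+1)}\tau^{ls}q^{l/2}$, and $\prod_i\eta_2(a_i)=\eta_2(\delta)$ give exactly the stated right-hand side. The paper supplies no proof here, merely asserting that the lemma follows from Theorems 6.26 and 6.27 of \cite{Lidl} by a straightforward computation; your self-contained argument (congruence diagonalization in odd characteristic, then factorization of the sum into one-variable quadratic Gauss sums via $\sum_{y\in\fq}\tilde\psi(ay^2)=\eta_2(a)G_1(\eta_2)$) is precisely the standard computation underlying those cited theorems, so it is the same approach in substance.
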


We note that $\eta_2(\delta)$ independs of the system of coordinates  chosen. 

\section{The number of rational points on the curve $\mathcal X_{d,a,b}$
}\label{sec4}
For $a \in \fqn^*, b\in \fqn$
we  compute the number of $\fqn$-rational points on the curve $\mathcal X_{d,a,b}$  by using   well-known properties of character sums. For that, we have the following lemma.

\begin{lemma}\label{lema41}
Let $a \in \fqn^*, b \in \fqn$ and $d$ an integer that divides $q^n-1$. The number of affine rational points on the curve $\mathcal X_{d,a,b}$ over $\fqn$ is 
\[	\mid\mathcal{X}_{d,a,b}(\fqn) \mid=q^n + \frac{1}{q^n}\sum_{\ell=1}^{d-1}  G_n(\chi_d^\ell) \sum_{c \in \F_{q^n}^*}\psi(cb)\overline{\chi_d}^\ell(-c)\sum_{x \in \F_{q^n}}  \psi\big(cax\Tr(x)\big).\] 
\end{lemma}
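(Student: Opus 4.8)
The plan is to count the affine points $(x,y)\in\fqn^2$ with $y^d=ax\Tr(x)+b$ by summing, over each $x\in\fqn$, the number of $y$ solving $y^d=ax\Tr(x)+b$, and then to express that fibre count via multiplicative characters. First I would recall the standard fact (a consequence of Lemma~\ref{dpower}, essentially Hilbert's theorem on the number of $d$-th roots) that for $w\in\fqn$ the number of $y\in\fqn$ with $y^d=w$ equals $\sum_{\ell=0}^{d-1}\chi_d^\ell(w)$, with the convention $\chi_d^\ell(0)=0$ for $\ell\neq0$ and $\chi_d^0(0)=1$; since $d\mid q^n-1$, this is valid. Summing over $x$ gives
\[
|\mathcal X_{d,a,b}(\fqn)|=\sum_{x\in\fqn}\sum_{\ell=0}^{d-1}\chi_d^\ell\big(ax\Tr(x)+b\big).
\]
The $\ell=0$ term contributes $q^n$ (taking the convention at the zero locus into account, the count of $x$ weighted by $\chi_d^0$ is exactly $q^n$), which produces the leading $q^n$; it remains to handle $\sum_{\ell=1}^{d-1}\sum_{x\in\fqn}\chi_d^\ell(ax\Tr(x)+b)$.

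Next I would linearize the dependence on $ax\Tr(x)+b$ by writing each nontrivial multiplicative character evaluated at a field element $t$ in terms of additive characters via the Gauss sum: for $\ell\not\equiv0\pmod d$,
\[
\chi_d^\ell(t)=\frac{1}{G_n(\overline{\chi_d}^\ell)}\sum_{c\in\fqn^*}\overline{\chi_d}^\ell(c)\,\psi(ct),
\]
which follows from Lemma~\ref{gausslemm}(3) and the orthogonality of additive characters (Lemma~\ref{sumlemm}); this identity is valid for all $t\in\fqn$, including $t=0$, because both sides vanish there. Substituting $t=ax\Tr(x)+b$ and using $G_n(\overline{\chi_d}^\ell)^{-1}=\chi_d^\ell(-1)q^{-n}G_n(\chi_d^\ell)$ from Lemma~\ref{gausslemm}(3) turns the inner sum into
\[
\frac{1}{q^n}\,G_n(\chi_d^\ell)\sum_{c\in\fqn^*}\overline{\chi_d}^\ell(-c)\,\psi(cb)\sum_{x\in\fqn}\psi\big(cax\Tr(x)\big),
\]
after pulling the factor $\psi(cb)$ out of the $x$-sum and absorbing $\chi_d^\ell(-1)$ into $\overline{\chi_d}^\ell(-c)=\chi_d^\ell(-1)\overline{\chi_d}^\ell(c)$. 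Summing over $\ell=1,\dots,d-1$ and adding the $\ell=0$ contribution $q^n$ yields exactly the claimed formula.

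The only genuinely delicate points, which I would be careful to spell out, are the bookkeeping at the zero locus of $ax\Tr(x)+b$ — making sure the $\ell=0$ term really gives $q^n$ and that the Gauss-sum identity for $\chi_d^\ell(t)$ is applied correctly at $t=0$ — and the manipulation of the reciprocal Gauss sum $G_n(\overline{\chi_d}^\ell)^{-1}$ into the form $q^{-n}\chi_d^\ell(-1)G_n(\chi_d^\ell)$, together with the sign absorbed via $\overline{\chi_d}^\ell(-c)$. Everything else is a routine interchange of finite sums. Note that the innermost sum $\sum_{x\in\fqn}\psi(cax\Tr(x))$ is precisely a character sum of the quadratic form $x\mapsto\Tr(cax\Tr(x))$ over $\fq$, which is why this lemma is the natural bridge to the quadratic-form machinery of Lemma~\ref{soma} in the next section.
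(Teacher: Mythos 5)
Your proof is correct, but it reverses the order of the two character expansions relative to the paper's argument. The paper starts from additive orthogonality (Lemma~\ref{sumlemm}), writing $|\mathcal{X}_{d,a,b}(\fqn)|=q^{-n}\sum_{c}\sum_{x,y}\psi\big(c(ax\Tr(x)+b-y^d)\big)$, isolates the $c=0$ term as the main term $q^n$, and then converts the $y$-sum $\sum_y\psi(-cy^d)$ into Gauss sums via the substitution $z=y^d$ together with Lemma~\ref{dpower} and the change of variable $w=-cz$; the identity $G_n(\chi)G_n(\overline{\chi})=\chi(-1)q^n$ is never needed there. You instead begin with the multiplicative-character count of $d$-th roots fibre by fibre, and then linearize $\chi_d^\ell(ax\Tr(x)+b)$ through the inversion formula $\chi(t)=G_n(\overline{\chi})^{-1}\sum_{c\neq0}\overline{\chi}(c)\psi(ct)$, which forces you to invoke Lemma~\ref{gausslemm}(3) to trade $G_n(\overline{\chi_d}^\ell)^{-1}$ for $q^{-n}\chi_d^\ell(-1)G_n(\chi_d^\ell)$ and to absorb the sign into $\overline{\chi_d}^\ell(-c)$. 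The two routes are equivalent and of comparable length: yours makes the origin of the $q^n$ term and the behaviour at the zero locus of $ax\Tr(x)+b$ completely transparent (the conventions at $t=0$ that you flag are indeed the only delicate point, and you handle them correctly), while the paper's version avoids any division by a Gauss sum and keeps every step an exact finite-sum manipulation. Either way, the innermost sum $\sum_{x}\psi\big(cax\Tr(x)\big)$ emerges as the quadratic-form character sum that the remainder of the paper evaluates.
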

\begin{proof} 
It follows from Lemma \ref{sumlemm} that 
\begin{equation}\label{eq2}\begin{aligned}
		\mid\mathcal{X}_{d,a,b}(\fqn)\mid& = \frac{1}{q^n} \sum_{c \in \F_{q^n}} \sum_{x,y \in \F_{q^n}}  \psi\big(c(ax\Tr(x)+b-y^d)\big)\\ 
		& = q^n + \frac{1}{q^n} \sum_{c \in \F_{q^n}^*}   \psi(cb)\sum_{x \in \F_{q^n}}  \psi\big(cax\Tr(x)\big) \sum_{y \in \F_{q^n}}  \psi(-c y^d).\\
	\end{aligned}
\end{equation}
Now, let $y^d=z$ and using Lemma \ref{dpower} we obtain
$$\mid\mathcal{X}_{d,a,b}(\fqn) \mid= q^n + \frac{1}{q^n} \sum_{c \in \F_{q^n}^*}   \psi(cb)\sum_{x \in \F_{q^n}}  \psi\big(cax\Tr(x)\big) \sum_{z \in \F_{q^n}}  \psi(-c z)\left[1+\dots+\chi_d^{d-1}(z)\right].$$
Making the change of variable $w=-cz$ and using Lemma \ref{sumlemm} we have that
$$\mid\mathcal{X}_{d,a,b}(\fqn) \mid=q^n + \frac{1}{q^n} \sum_{c \in \F_{q^n}^*}\sum_{\ell=1}^{d-1} \psi(cb)\overline{\chi_d}^\ell(-c)\sum_{x \in \F_{q^n}}  \psi\big(cax\Tr(x)\big) \sum_{w \in \F_{q^n}}  \psi(w)\chi_d^\ell(w).$$
Therefore
\begin{equation}\label{eq3}
	\mid\mathcal{X}_{d,a,b}(\fqn) \mid=q^n + \frac{1}{q^n}\sum_{\ell=1}^{d-1}  G_n(\chi_d^\ell) \sum_{c \in \F_{q^n}^*}\psi(cb)\overline{\chi_d}^\ell(-c)\sum_{x \in \F_{q^n}}  \psi\big(cax\Tr(x)\big).
\end{equation}
\end{proof}
In order to compute the value of the right-hand side sum of Equation~\eqref{eq3}, we will use the fact that $\Tr(cax\Tr(x))$ defines a quadratic form from $\fqn$ into $\fq$. For now on, let  $Q_c(x) $ be a quadratic form of  $\fqn$ over $\F_q$ defined by $Q_c(x) = \Tr(cx\Tr(x))$ and let $B_c(x,y)$ be the bilinear symmetric form associated to $Q_c$.

\begin{proposition}\label{propdim}
For $c\in\fqn^*$, we have that 
\[\dim_{\F_q}\big(\text{rad}(Q_c)\big)= \begin{cases}
n-1 \text{ if } c \in \F_q^*;\\
n-2 \text{ if } c \in \fqn \setminus \F_q.
\end{cases}\]
\end{proposition}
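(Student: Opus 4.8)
## Proof plan for Proposition~\ref{propdim}

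The plan is to compute the bilinear form $B_c$ associated to $Q_c(x) = \Tr(cx\Tr(x))$ explicitly, identify its radical as the kernel of a linear map, and then determine the dimension of that kernel. First I would expand: for $\alpha,\beta\in\fqn$,
\[
Q_c(\alpha+\beta) - Q_c(\alpha) - Q_c(\beta) = \Tr\big(c\alpha\Tr(\beta) + c\beta\Tr(\alpha)\big),
\]
so that $B_c(\alpha,\beta) = \tfrac12\Tr\big(c\alpha\Tr(\beta) + c\beta\Tr(\alpha)\big)$. Using $\fq$-linearity of the inner trace $\Tr=\Tr_{\fqn/\fq}$ and the fact that $\Tr(\alpha)\in\fq$, one can push scalars around: $\Tr(c\beta\Tr(\alpha)) = \Tr(\alpha)\Tr(c\beta)$ and similarly $\Tr(c\alpha\Tr(\beta)) = \Tr(\beta)\Tr(c\alpha)$. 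Hence
\[
2B_c(\alpha,\beta) = \Tr(\beta)\Tr(c\alpha) + \Tr(\alpha)\Tr(c\beta).
\]

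Next I would read off the radical. An element $\alpha$ lies in $\rad(Q_c)$ iff $\Tr(\beta)\Tr(c\alpha) + \Tr(\alpha)\Tr(c\beta) = 0$ for all $\beta\in\fqn$. Think of this as a linear functional in $\beta$: it is the functional $\beta\mapsto \Tr(c\alpha)\cdot\Tr(\beta) + \Tr(\alpha)\cdot\Tr(c\beta)$, i.e. a linear combination of the two $\fq$-linear functionals $\beta\mapsto\Tr(\beta)$ and $\beta\mapsto\Tr(c\beta)$ with coefficients $\Tr(c\alpha)$ and $\Tr(\alpha)$. When $c\notin\fq$, the functionals $\Tr(\cdot)$ and $\Tr(c\,\cdot)$ are $\fq$-linearly independent (if $\lambda_1\Tr(\beta)+\lambda_2\Tr(c\beta) = \Tr((\lambda_1+\lambda_2 c)\beta)\equiv 0$ then $\lambda_1 + \lambda_2 c = 0$, forcing $\lambda_1=\lambda_2=0$ since $1,c$ are $\fq$-independent); therefore $\alpha\in\rad(Q_c)$ forces both $\Tr(\alpha)=0$ and $\Tr(c\alpha)=0$. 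Conversely those two conditions clearly suffice. So $\rad(Q_c) = \ker\Tr(\cdot)\cap\ker\Tr(c\,\cdot)$, an intersection of two distinct hyperplanes, hence of dimension $n-2$. When $c\in\fq^*$, the functional $\Tr(c\,\cdot) = c\Tr(\cdot)$ is just a scalar multiple of $\Tr(\cdot)$, the condition collapses to $\Tr(\alpha)=0$ alone (note $2B_c(\alpha,\beta) = (\Tr(\beta)c + \Tr(c\beta))\Tr(\alpha)$ wait — more cleanly: $2B_c(\alpha,\beta) = 2c\,\Tr(\alpha)\Tr(\beta)$), giving $\rad(Q_c) = \ker\Tr(\cdot)$ of dimension $n-1$.

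I expect the only genuinely delicate point to be the clean handling of the scalar manipulations inside the trace — specifically justifying $\Tr\big(c\beta\Tr(\alpha)\big) = \Tr(\alpha)\Tr(c\beta)$, which uses that $\Tr(\alpha)\in\fq$ together with $\fq$-linearity of $\Tr$ — and, for the $c\in\fq^*$ branch, keeping track of the characteristic (since $p$ is odd, division by $2$ and the factor $c$ are both harmless, so no degeneracy is lost). Everything else is linear algebra: the dimension count for an intersection of hyperplanes, and the linear independence of $\Tr(\cdot)$ and $\Tr(c\,\cdot)$ over $\fq$, both of which are standard.
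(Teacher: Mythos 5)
Your proposal is correct and follows essentially the same route as the paper: both compute $B_c(\alpha,\beta)=\Tr(\beta)\Tr(c\alpha)+\Tr(\alpha)\Tr(c\beta)$ and then identify the radical as $\ker\Tr$ when $c\in\fq^*$ and as $\ker\Tr\cap\ker\Tr(c\,\cdot)$, an intersection of two distinct hyperplanes, when $c\notin\fq$. The only cosmetic difference is that the paper factors the condition as $\Tr\big(\beta(\Tr(c\alpha)+c\Tr(\alpha))\big)=0$ and invokes non-degeneracy of the trace form, whereas you argue via linear independence of the functionals $\Tr(\cdot)$ and $\Tr(c\,\cdot)$; these are equivalent.
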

\begin{proof}
The dimension of the radical of the quadratic form $Q_c$ is given by the dimension of the radical of the bilinear form  $B_c(x,y)$, i.e., the dimension of the subspace generated by the elements  $x \in \fqn$ such that $B_c(x,y) =0$ for all $y \in \F_{q^n}$. We observe that
\begin{equation}\label{eq4}
	\begin{aligned}
		B_c(x,y) & =  \Tr(c(x+y)\Tr(x+y))-\Tr(cx\Tr(x))-\Tr(cy\Tr(y)) \\
		& = \Tr(cx \Tr(y) + cy \Tr(x))\\
		& = \Tr(y) \Tr(cx) + \Tr(x)\Tr(cy)\\
		& = \Tr(y (\Tr(cx)+c\Tr(x))).
	\end{aligned}
\end{equation}
Then, $B_c(x,y) =0$ for all $y \in \F_{q^n}$ if and only if $\Tr(cx)+c\Tr(x) = 0$. Therefore, are interested in computing the dimension of 
$$V=\{x\in\fqn:\Tr(cx)+c\Tr(x) = 0\}.$$
We split the proof into two cases:
\begin{itemize}
	\item For $c\in\fq^*$, $\Tr(cx)+c\Tr(x) = c\Tr(x)+c\Tr(x)=2c\Tr(x)$, that implies $$|V|=|\{x\in\fqn:\Tr(x)=0\}|=q^{n-1}$$
	and then $\dim(V)=n-1$.
	\item For $c\in \fqn \setminus \fq,$ if $x \in \fqn$ is such that $\Tr(x)\neq 0$, then $\frac{\Tr(cx)}{\Tr(x)}\in \fq$ and $\frac{\Tr(cx)}{\Tr(x)}\neq -c$. Therefore for any element $x \in V$ we have that $\Tr(x) =0.$ It follows that $\Tr(cx) = - c\Tr(x) =0, $ then $V = V_1 \cap V_2$, where
	$V_1 = \{x \in \F_{q^n} \mid \Tr(x) = 0\}$ and $V_2 = \{x \in \F_{q^n} \mid \Tr(cx) = 0\}$. Since $c \in\F_{q^n} \setminus \F_q$ and $V_1\neq V_2$, it is direct of the fact $\dim(V_1)=\dim(V_2)=n-1$ to verify that $\dim(V)=n-2.$
\end{itemize}
This completes the proof of our assertion.

\end{proof}

\begin{proposition}\label{det} 
 Let $H$ be the symmetric matrix associated to $Q_c$ and let $\delta$ be as defined in Lemma~\ref{soma} for some basis of $\F_{q^n}$ over $\F_q$. Then
	$$\eta_2(\delta) = \begin{cases}
		\eta_2(c), & \text{ if } c \in \fq^*;\\
		\eta_2(-1),
& \text{ if } c \in \fqn \setminus \fq.
	\end{cases}$$
	\end{proposition}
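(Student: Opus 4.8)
The starting point is the remark stated right after Lemma~\ref{soma}: the scalar $\eta_2(\delta)$ does not depend on the system of coordinates used to represent $Q_c$. Hence we are free to pick, for each $c$, the most convenient basis of $\fqn$ over $\fq$. The plan is to rewrite $Q_c$ in closed form, notice that it is carried by one or two linear functionals, choose coordinates adapted to those functionals, and diagonalise by inspection.

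Since $\Tr(x)\in\fq$ and $\Tr$ is $\fq$-linear, we have
$$Q_c(x) = \Tr\big(cx\Tr(x)\big) = \Tr(x)\,\Tr(cx).$$
If $c\in\fq^*$, then $\Tr(cx)=c\Tr(x)$, so $Q_c(x)=c\,\Tr(x)^2$. As $\Tr\colon\fqn\to\fq$ is a nonzero linear functional, there is a basis of $\fqn$ over $\fq$ in which the first coordinate of $x$ is $\Tr(x)$; in these coordinates $Q_c(X)=c\,x_1^2$, which is already diagonal of rank $1$ with $\delta=c$, so $\eta_2(\delta)=\eta_2(c)$. If instead $c\in\fqn\setminus\fq$, the two $\fq$-linear functionals $x\mapsto\Tr(x)$ and $x\mapsto\Tr(cx)$ are linearly independent over $\fq$ (a relation $\Tr(cx)=\lambda\Tr(x)$ valid for all $x$ would force $c=\lambda\in\fq$), so we may choose coordinates with $x_1=\Tr(x)$ and $x_2=\Tr(cx)$, whence $Q_c(X)=x_1x_2$. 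The invertible substitution $x_1=u+w,\ x_2=u-w$ (legitimate since $p$ is odd) turns this into $u^2-w^2$, a diagonal form of rank $2$ with $\delta=1\cdot(-1)=-1$, so $\eta_2(\delta)=\eta_2(-1)$.

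The only point deserving a word of care is the passage to adapted coordinates: one has to know that $\Tr$ (resp.\ the pair of functionals above) extends to a full coordinate system on $\fqn$ over $\fq$, which is precisely the non-vanishing (resp.\ linear independence) just recorded, together with the fact that a linearly independent family of functionals can always be completed to a basis of the dual space. I would also note that the ranks $1$ and $2$ obtained here match Proposition~\ref{propdim}, since $\rank(Q_c)=n-\dim_{\fq}\rad(Q_c)$; this is a convenient consistency check, though not logically required. Apart from that, the proof is a two-line diagonalisation with no genuine obstacle.
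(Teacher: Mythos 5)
Your proof is correct, and it takes a genuinely different route from the paper's. The key move you make -- writing $Q_c(x)=\Tr(cx\Tr(x))=\Tr(x)\Tr(cx)$ and recognising the form as a product of one or two $\fq$-linear functionals -- lets you choose coordinates adapted to those functionals and read off a rank-$1$ form $c\,x_1^2$ (so $\delta=c$) or a hyperbolic plane $x_1x_2\sim u^2-w^2$ (so $\delta=-1$) immediately. The paper instead fixes an explicit trace-adapted basis, which forces a case split according to whether $p\mid n$ (since $\alpha_0=n^{-1}$ is unavailable when $p\mid n$), writes out the full matrix $2H$ entry by entry, and then performs row/column reductions, invoking Proposition~\ref{propdim} to guarantee a nonzero off-diagonal entry in the second case. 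Your argument absorbs that last point into the direct observation that $\Tr(x)$ and $\Tr(cx)$ are $\fq$-linearly independent exactly when $c\notin\fq$ (via nondegeneracy of the trace form), and it avoids the $p\mid n$ dichotomy altogether. Both proofs rest on the same unproved-but-standard fact, stated after Lemma~\ref{soma}, that $\eta_2(\delta)$ is a basis invariant (the discriminant changes by $(\det M)^2$ under congruence), and you correctly flag the only delicate step, namely that a linearly independent family of functionals extends to a coordinate system. What the paper's computation buys in exchange for its length is the explicit matrix $2H$; since that matrix is not reused elsewhere, your shorter argument loses nothing.
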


\begin{proof}
Since $\eta_2(\delta)$ does not depend on the basis, we set a basis $\alpha_0, \alpha_1, \dots, \alpha_{n-1}$ of $\fqn $ over $\fq $ such that
\begin{itemize}
    \item $\alpha_0 =n^{-1}$ and $\Tr(\alpha_i)=0$ for all $1\le i\le n-1$ if $\gcd(n,q)=1$;
    \item $\Tr(\alpha_0)=1$, $\alpha_1=1$ and $\Tr(\alpha_0\alpha_i)=\Tr(\alpha_i)=0$ for all $2\le i\le n-1$ in the case when $\gcd(q, n)\ne 1$.
\end{itemize}

For $0 \le i \le n-1$, let $x_i,y_i  \in \fq$ such that  $y  = \sum_{j=0}^{n-1} y_j\alpha_j$ and $x=  \sum_{j=0}^{n-1} x_j\alpha_j$ and let us denote  $X=(x_0,\dots,x_{n-1})$ and $Y=(y_0,\dots,y_{n-1})$. We recall that $Q_c(x)=XHX^T$ and 
\begin{equation}\label{equivalencia}
	\begin{aligned}
		 B_c(X,Y) & = (X+Y)H(X+Y)^T - XHX^T-YHY^T\\
		 & = XHY^T + YHX^T\\
		 & = YH^TX^T + YHX^T\\
		& = Y(2H)X^T.
	\end{aligned}
\end{equation}
Therefore, we can determine $\delta$ from $B_c(x,y)$ by computing the determinant of the reduced matrix associated to $2H$. In order to do this, we observe that $\Tr(x)=x_0$ 
and
$$\Tr(cx)=\begin{cases}
\displaystyle x_0c_0 +  \sum_{1\le i,j \le n-1} x_ic_j\beta_{i,j}
,& \text{if $p\nmid n$}\\
\displaystyle x_0(c_1+c_0\beta_{0,0}
)+c_0x_1+\sum_{2\le i,j \le n-1} x_ic_j\beta_{i,j}
,& \text{if $p\mid n$},
\end{cases}
$$
where $\beta_{i,j}=\Tr(\alpha_i\alpha_j)$.  We  obtain  expressions for  $\Tr(y)$ and $\Tr(cy)$ by a similar process.
Therefore, it follows from \eqref{eq4} that
{\small \begin{align*}
B_c(x,y)
&=\Tr(y)\Tr(cx)+\Tr(cy)\Tr(x)\\
&=\begin{cases}
\displaystyle 
 2x_0y_0c_0 + \sum_{1\le i,j \le n-1}c_j\beta_{i,j}
 (y_0x_i+x_0y_i), &\text{if $p\nmid n$};\\
 \displaystyle 2x_0y_0(c_1+c_0\beta_{0,0}
 )+c_0x_1y_0+c_0x_0y_1+\sum_{2\le i,j \le n-1}c_j\beta_{i,j}(y_0x_i+x_0y_i)
 , &\text{if $p\mid n$}.
\end{cases}
\end{align*}
}

We obtain 
$$2H=\small{\begin{pmatrix} 
2c_0 &  \displaystyle \sum_{j=1}^{ n-1}\beta_{1,j}
c_j& \cdots & \displaystyle \sum_{j=1}^{ n-1}\beta_{n-1,j}
c_j\\
 \displaystyle \sum_{j=1}^{ n-1}\beta_{1,j}
 c_j& 0& \cdots & 0\\
\vdots & \vdots & \ddots & \vdots \\
\displaystyle \sum_{j=1}^{ n-1}\beta_{n-1,j}
c_j& 0 & \cdots & 0
\end{pmatrix}}$$
in the case when $p\nmid n$ and
$$2H=\small{\begin{pmatrix} 
2(c_1+c_0\beta_{0,0})
& c_0& \displaystyle \sum_{j=2}^{ n-1}\beta_{2,j}
c_j
& \cdots & \displaystyle \sum_{j=2}^{ n-1}\beta_{n-1,j}
c_j\\
c_0
& 0&0& \cdots & 0\\
\displaystyle \sum_{j=2}^{ n-1}\beta_{2,j}
c_j
& 0&0& \cdots & 0\\
\vdots & \vdots & \vdots & \ddots & \vdots \\
\displaystyle \sum_{j=1}^{ n-1}\beta_{n-1,j}
c_j& 0& 0 & \cdots & 0
\end{pmatrix}}$$
in the case when $p\mid n$.

In order to compute the reduced form of $2H$, we observe that
\begin{itemize}
	\item In the case when $c \in \F_q$ then $c_0=c$ and $c_1=c_2 = \cdots = c_{n-1} = 0$ if $p\nmid n$  and   $c_1=c$ and $c_0=c_2 = \cdots = c_{n-1} = 0$ if $p\mid n$.  Therefore the associated reduced matrix is $(2c)$.
	\item In the case when $c \in \fqn \setminus \F_q$, then Proposition~\ref{propdim} implies that either there exists $k\in\{1,\dots,n-1\}$ such that $\sum_{j=1}^{ n-1}\beta_{k,j}
	c_j\neq 0$ if $p\nmid n$ and $c_0\ne 0$ or there exists $k\in\{2,\dots,n-1\}$ such that $\sum_{j=2}^{ n-1}\beta_{k,j}
	c_j\neq 0$ if $p\mid n$. Then straightforward manipulations of the lines and columns shows that $2H$ reduces to a matrix of the form 
	$$\begin{pmatrix} 
		u&v\\
		v&0
	\end{pmatrix},$$
	where $v\ne 0$.
\end{itemize}
 In sum, we have that the quadratic character of the determinant $\delta$ of the reduced matrix of $H$ is given by
 $$\eta_2(\delta) = \begin{cases}
 	\eta_2(c), & \text{ if } c \in \fq^*;\\
 	\eta_2(-1)
 	, & \text{ if } c \in \fqn\setminus\fq.
 \end{cases}$$
This completes the proof.
\end{proof}

Combining Lemma~\ref{soma} and Proposition \ref{propdim} and \ref{det}, we have the following result. 
\begin{theorem}\label{dd} For $c\in\fqn^*$, we have
$$\sum_{x \in \fqn} \psi(cx\Tr(x)) = \begin{cases}
(-1)^{s+1}\eta_2(c)\tau^s q^{n-1/2}, &\text{ if } c \in \F_q^*;\\
q^{n-1}, &\text{ if }  c \in \fqn\setminus\F_q.\\
\end{cases}$$

\end{theorem}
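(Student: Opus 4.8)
The plan is to reduce the exponential sum $\sum_{x\in\fqn}\psi(cx\Tr(x))$ to a sum of the form $\sum_{X\in\fq^n}\tilde\psi(F(X))$ for the quadratic form $F=Q_c$ over $\fq$, and then apply Lemma~\ref{soma} together with the rank and determinant data already computed in Propositions~\ref{propdim} and~\ref{det}. The first step is to observe that $\Tr_{\fqn/\fp}(cx\Tr(x)) = \Tr_{\fq/\fp}\big(\Tr_{\fqn/\fq}(cx\Tr(x))\big) = \Tr_{\fq/\fp}(Q_c(x))$, so that $\psi(cx\Tr(x)) = \tilde\psi(Q_c(x))$ for every $x\in\fqn$. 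Fixing any $\fq$-basis of $\fqn$ identifies $\fqn$ with $\fq^n$ and $Q_c$ with a quadratic form $F(X)=XHX^T$ in $n$ variables over $\fq$, where $H$ is the associated matrix of $Q_c$; hence $\sum_{x\in\fqn}\psi(cx\Tr(x)) = \sum_{X\in\fq^n}\tilde\psi(F(X))$.

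Next I would split according to whether $c\in\fq^*$ or $c\in\fqn\setminus\fq$ and feed in the structural information. By Proposition~\ref{propdim}, $\dim_{\fq}\rad(Q_c)=n-1$ when $c\in\fq^*$ and $n-2$ when $c\notin\fq$; since the rank $l$ of $H$ equals $n-\dim_{\fq}\rad(Q_c)$, this gives $l=1$ in the first case and $l=2$ in the second. Applying Lemma~\ref{soma} with these values of $l$ yields
$$\sum_{X\in\fq^n}\tilde\psi(F(X)) = (-1)^{l(s+1)}\tau^{ls}\eta_2(\delta)\,q^{\,n-l/2},$$
where $\delta$ is the product of the nonzero diagonal entries of a diagonalization of $H$. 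For $c\in\fq^*$, $l=1$ gives the exponent $n-\tfrac12$, the sign $(-1)^{s+1}$, the factor $\tau^s$, and by Proposition~\ref{det} the factor $\eta_2(\delta)=\eta_2(c)$, which is exactly $(-1)^{s+1}\eta_2(c)\tau^s q^{n-1/2}$. For $c\in\fqn\setminus\fq$, $l=2$ gives exponent $n-1$; the sign $(-1)^{2(s+1)}=1$; the factor $\tau^{2s}$; and by Proposition~\ref{det} the factor $\eta_2(\delta)=\eta_2(-1)$. It remains to check that $\tau^{2s}\eta_2(-1)=1$: indeed $\tau^2=1$ when $p\equiv 1\pmod 4$ and $\tau^2=-1$ when $p\equiv 3\pmod 4$, while $\eta_2(-1)=1$ precisely when $q^n\equiv 1\pmod 4$; a short case analysis on $p\bmod 4$ and the parity of $s$ shows $\tau^{2s}\eta_2(-1)=1$ in all cases, leaving $q^{n-1}$.

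The genuinely substantive content has already been isolated into the earlier propositions, so the main obstacle here is bookkeeping rather than a new idea: one must be careful that $\eta_2$ on $\fq^*$, $\eta_2(-1)$, and the powers of $\tau$ are all interpreted over the correct field and that the identity $\tau^{2s}\eta_2(-1)=1$ is verified over $\fqn$ (equivalently, that $(-1)^{(q^n-1)/2}=\tau^{2s}$), matching the convention in Definition~\ref{vD} and Theorem~\ref{gaussdois}. Once that bit of character arithmetic is confirmed, the two cases combine to give the stated formula, completing the proof.
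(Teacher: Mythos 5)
Your argument is exactly the paper's: Theorem~\ref{dd} is obtained by combining Lemma~\ref{soma} with Propositions~\ref{propdim} and~\ref{det}, reading off the rank $l\in\{1,2\}$ from the dimension of the radical, and your two cases reproduce this. One detail needs correcting, though: in Lemma~\ref{soma} and Proposition~\ref{det} the symbol $\eta_2$ is the quadratic character of $\fq^*$ (the determinant $\delta$ lies in $\fq^*$), so $\eta_2(-1)=(-1)^{(q-1)/2}$, and the identity you need follows at once from $(-1)^{(q-1)/2}=\tau^{2s}$, which gives $\tau^{2s}\eta_2(-1)=\tau^{4s}=1$. Your parenthetical reformulation, that the identity is ``equivalently $(-1)^{(q^n-1)/2}=\tau^{2s}$,'' is false: for $p\equiv 3\pmod 4$, $s$ odd and $n$ even (e.g.\ $q=3$, $n=2$) one has $(-1)^{(q^n-1)/2}=1$ while $\tau^{2s}=-1$, so carrying out your case analysis with $\eta_2(-1)$ computed over $\fqn$ would produce a spurious sign. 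With $\eta_2$ read over $\fq$, as Lemma~\ref{soma} requires, the verification closes and the rest of the proof is fine.
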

Theorem~\ref{dd} allows us to express the number of rational points in terms of Gauss sums.

\begin{proposition}\label{termsofGauss}
Let $v, D$ and $B$ be as in Definition~\ref{vD}. Then
	$$\mid\mathcal{X}_{d,a,b}(\fqn)\mid=q^n+\sum_{\ell=1}^{d-1} \chi_d^\ell(b) q^{n-1}+N,$$
	where $N=N_{d,a,b,n}$ is given below.
\begin{enumerate} 
	\item If $D$ is odd and $B=0$, then 
		$$N=-\frac{q-1}q\sum_{j=1}^{v-1}  G_n(\chi_d^{jD})\chi_d^{jD}(-a).$$
	\item If $D$ is odd and $B\neq0$, then
		{\small$$N=\frac{1}q\sum_{\ell=1}^{d-1}  G_n(\chi_d^\ell)\Big[(-1)^{s+1}\chi_d^\ell(-aB)\chi_2(B)\tau^s\sqrt{q} G_1\big(\eta_{2D}^{D-2\ell}\big)-\chi_d^\ell(-aB)G_1\big(\eta_{2D}^{-2\ell}\big)\Big].$$}
	\item If $D$ is even and $B=0$, then
		{\small$$N=\frac{q-1}q\left((-1)^{s+1}\tau^s\sqrt{q}\sum_{j=0}^{v-1}G_n(\chi_d^{jD+\frac{D}{2}})\chi_d^{jD+\frac{D}{2}}(-a)-\sum_{j=1}^{v-1}  G_n(\chi_d^{jD})\chi_d^{jD}(-a)\right).$$}
	\item If $D$ is even and $B\neq0$, then
		{\small$$N= \frac{1}q\sum_{\ell=1}^{d-1}  G_n(\chi_d^\ell)\Big[(-1)^{s+1}\chi_d^\ell(-aB)\chi_2(B)\tau^s\sqrt{q} G_1\big(\eta_{D}^{\frac{D}{2}-\ell}\big)-\chi_d^\ell(-aB)G_1\big(\eta_{D}^{-\ell}\big)\Big].$$}
\end{enumerate}	
\end{proposition}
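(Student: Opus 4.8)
The plan is to feed the quadratic-form evaluation of Theorem~\ref{dd} into the character-sum identity of Lemma~\ref{lema41} and then unwind the resulting sums. Recall that Lemma~\ref{lema41} gives
\[|\mathcal{X}_{d,a,b}(\fqn)| = q^n + \frac{1}{q^n}\sum_{\ell=1}^{d-1} G_n(\chi_d^\ell)\sum_{c\in\fqn^*}\psi(cb)\,\overline{\chi_d}^\ell(-c)\sum_{x\in\fqn}\psi\big(cax\Tr(x)\big),\]
and, applying Theorem~\ref{dd} with $ca$ in place of $c$, the inner $x$-sum equals $(-1)^{s+1}\eta_2(ca)\tau^s q^{n-1/2}$ when $ca\in\fq^*$ (equivalently $c\in a^{-1}\fq^*$) and $q^{n-1}$ when $ca\in\fqn\setminus\fq$. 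Since $\fqn^*$ is the disjoint union of $a^{-1}\fq^*$ and $\fqn\setminus a^{-1}\fq$, I would split the $c$-sum along this partition.

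On the set $\fqn\setminus a^{-1}\fq$ I would write $\sum_{c\in\fqn\setminus a^{-1}\fq}=\sum_{c\in\fqn^*}-\sum_{c\in a^{-1}\fq^*}$. The resulting full sum $\tfrac{q^{n-1}}{q^n}\sum_\ell G_n(\chi_d^\ell)\sum_{c\in\fqn^*}\psi(cb)\overline{\chi_d}^\ell(-c)$ vanishes when $b=0$ by orthogonality (Lemma~\ref{item433}, since $\chi_d^\ell$ is nontrivial for $1\le\ell\le d-1$), and when $b\neq0$ the substitution $c\mapsto c/b$ turns it into $\tfrac1q\sum_\ell \chi_d^\ell(-b)\,G_n(\chi_d^\ell)G_n(\overline{\chi_d}^\ell)=q^{n-1}\sum_\ell\chi_d^\ell(b)$ by Lemma~\ref{gausslemm}(3); either way this is exactly the announced term $\sum_{\ell=1}^{d-1}\chi_d^\ell(b)q^{n-1}$. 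What is left over is
\[N=\frac{1}{q^n}\sum_{\ell=1}^{d-1} G_n(\chi_d^\ell)\sum_{c\in a^{-1}\fq^*}\psi(cb)\,\overline{\chi_d}^\ell(-c)\Big[(-1)^{s+1}\eta_2(ca)\tau^s q^{n-1/2}-q^{n-1}\Big].\]

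It then remains to compute the $\fq^*$-sum. Writing $c=t/a$ with $t\in\fq^*$, transitivity of the trace gives $\psi(cb)=\tilde\psi(tB)$, while $\overline{\chi_d}^\ell(-c)=\chi_d^\ell(-a)\,\overline{\eta_D}^\ell(t)$ (using that $\chi_d$ restricts to a character $\eta_D$ of $\fq^*$ of order $D=d/v$) and $\eta_2(ca)=\eta_2(t)$. Hence $N$ equals $\tfrac1{q^n}\sum_\ell G_n(\chi_d^\ell)\chi_d^\ell(-a)$ times the difference of $(-1)^{s+1}\tau^s q^{n-1/2}\sum_{t\in\fq^*}\tilde\psi(tB)\overline{\eta_D}^\ell(t)\eta_2(t)$ and $q^{n-1}\sum_{t\in\fq^*}\tilde\psi(tB)\overline{\eta_D}^\ell(t)$. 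If $B=0$ these are evaluated by orthogonality of characters of $\fq^*$: $\sum_t\overline{\eta_D}^\ell(t)=q-1$ exactly when $D\mid\ell$ (set $\ell=jD$, $1\le j\le v-1$), and $\sum_t\overline{\eta_D}^\ell(t)\eta_2(t)=q-1$ exactly when $\overline{\eta_D}^\ell\eta_2$ is trivial — which never happens for $D$ odd (this yields case (1)) and happens exactly when $\ell\equiv D/2\pmod D$ for $D$ even, using $\eta_2=\eta_D^{D/2}$ (set $\ell=jD+D/2$, $0\le j\le v-1$; this yields case (3)). If $B\neq0$, the substitution $t\mapsto t/B$ turns the two $\fq^*$-sums into $\eta_D^\ell(B)G_1(\overline{\eta_D}^\ell)$ and $\eta_D^\ell(B)\eta_2(B)G_1(\overline{\eta_D}^\ell\eta_2)$; then $\chi_d^\ell(-a)\eta_D^\ell(B)=\chi_d^\ell(-aB)$, and expressing $\overline{\eta_D}^\ell$ and $\overline{\eta_D}^\ell\eta_2$ as powers of a character $\eta_{2D}$ of $\fq^*$ of order $2D$ when $D$ is odd (via $\eta_D=\eta_{2D}^2$, $\eta_2=\eta_{2D}^D$; note $2D\mid q^n-1$ here) or of $\eta_D$ when $D$ is even converts them into $G_1(\eta_{2D}^{-2\ell}),G_1(\eta_{2D}^{D-2\ell})$, respectively $G_1(\eta_D^{-\ell}),G_1(\eta_D^{D/2-\ell})$; collecting the powers of $q$ ($q^{n-1/2}/q^n=\sqrt q/q$, $q^{n-1}/q^n=1/q$) produces cases (2) and (4).

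The delicate part is entirely in the last step: one must keep strict track of the orders of the restricted characters $\eta_2,\eta_D,\eta_{2D}$ of $\fq^*$ and of which of them is trivial — the single dichotomy ``$\eta_2\in\langle\eta_D\rangle$ or not'' is what forces the split into $D$ even/odd and fixes the exact ranges of the index $j$ — and one must carefully distinguish the Gauss sums $G_1$ over $\fq$ from $G_n$ over $\fqn$, bearing in mind that the parity of $\tfrac{q^n-1}{q-1}$ (hence of $n$) governs how $\chi_d$ and $\chi_2$ restrict to $\fq^*$. Once these bookkeeping facts are pinned down, each of the four cases is the computation sketched above.
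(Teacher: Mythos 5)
Your proposal is correct and follows essentially the same route as the paper's proof: combine Lemma~\ref{lema41} with Theorem~\ref{dd}, split the $c$-sum into the full sum over $\fqn^*$ (which yields $q^{n-1}\sum_{\ell}\chi_d^\ell(b)$ via orthogonality when $b=0$ and via $G_n(\chi)G_n(\overline\chi)=\chi(-1)q^n$ otherwise) plus a correction supported on $ca\in\fq^*$, and then evaluate that correction by orthogonality over $\fq^*$ when $B=0$ and by reduction to the Gauss sums $G_1$ when $B\neq 0$, with the $D$ odd/even dichotomy governed by whether the quadratic character of $\fq^*$ lies in $\langle\eta_D\rangle$. The bookkeeping of restricted character orders you flag as the delicate point is exactly where the paper's argument also does its work, and your treatment of it is consistent with theirs.
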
 

\begin{proof}

By Lemma~\ref{lema41} and Theorem~\ref{dd}, we have that $|\mathcal{X}_{d,a,b}(\fqn)|$ is equal to
{\small
\begin{equation*}
    q^n+ \frac{1}{q}\sum_{\ell=1}^{d-1}  G_n(\chi_d^\ell)\left[ \sum_{ca \in \F_{q^n}\setminus \fq}\psi(cb)\overline{\chi_d}^\ell(-c)+\sum_{ca \in \F_{q}^*}\psi(cb)\overline{\chi_d}^\ell(-c)\big((-1)^{s+1}\eta_2(ac)\tau^s\sqrt{q}\big)\right].
\end{equation*}}

That can be reewritten as

{\small \begin{equation}\label{eq6}
     q^n+ \frac{1}{q}\sum_{\ell=1}^{d-1}  G_n(\chi_d^\ell)\left[ \sum_{ca \in \F_{q^n}^*}\psi(cb)\overline{\chi_d}^\ell(-c)+\sum_{ca \in \F_{q}^*}\psi(cb)\overline{\chi_d}^\ell(-c)\big((-1)^{s+1}\eta_2(ac)\tau^s\sqrt{q}-1\big)\right].
\end{equation}}

Let $T= \displaystyle\sum_{\ell=1}^{d-1}  G_n(\chi_d^\ell)\sum_{ca \in \F_{q^n}^*}\psi(cb)\overline{\chi_d}^\ell(-c)$. By Lemma~\ref{sumlemm}, if $b=0$, then $T=0$. Otherwise, Lemma~\ref{gausslemm} entails that
\begin{equation}\label{eq5}T=\sum_{\ell=1}^{d-1}\chi_d^\ell(-b)G_n(\chi_d^\ell)G_n(\overline{\chi_d}^\ell)=\sum_{\ell=1}^{d-1} \chi_d^\ell(b) q^{n}.
\end{equation}
Now, we compute $S_\ell=\displaystyle\sum_{z \in \F_{q}^*}\psi\big(z\tfrac{b}{a}\big)\overline{\chi_d}^\ell\big(\tfrac{-z}{a}\big)\big((-1)^{s+1}\eta_2(z)\tau^s\sqrt{q}-1\big)$. 

Assume that $D$ is odd. We recall that $\eta_{D}$ is the restriction of $\chi_d$ to $\fq^*$ and that $\eta_{2D}$ is such that $\eta_{2D}^{2}=\eta_D$. Using this notation, 
$$\begin{aligned}
	S_{\ell}&=(-1)^{s+1}\chi_d^\ell(-a)\tau^s\sqrt{q}\sum_{z \in \F_{q}^*}\psi\big(z\tfrac{b}{a}\big)\eta_{2D}^{D-2\ell}(z)-\chi_d^\ell(-a)\sum_{z \in \F_{q}^*}\psi\big(z\tfrac{b}{a}\big)\eta_{2D}^{-2\ell}(z)\\
	&=(-1)^{s+1}\chi_d^\ell(-a)\tau^s\sqrt{q}\sum_{z \in \F_{q}^*}\tilde\psi(zB)\eta_{2D}^{D-2\ell}(z)-\chi_d^\ell(-a)\sum_{z \in \F_{q}^*}\tilde\psi(zB)\eta_{2D}^{-2\ell}(z).\\
\end{aligned}$$
 We split the proof into two cases.
\begin{itemize}
	\item If $B=0$, then it follows from Lemma~\ref{sumlemm} that
	$$S_{\ell}=\begin{cases}
		0,&\text{ if }D\nmid\ell;\\
	-\chi_d^\ell(-a)(q-1),&\text{ if }D\mid\ell.\\
	\end{cases}$$
	\item If $B\neq0$, then
	$$\begin{aligned}
		S_{\ell}&=(-1)^{s+1}\chi_d^\ell(-a)\tau^s\sqrt{q}\eta_{2D}^{D-2\ell}(B^{-1}) G_1\big(\eta_{2D}^{D-2\ell}\big)-\chi_d^\ell(-a)\eta_{2D}^{-2\ell}(B^{-1})G_1\big(\eta_{2D}^{-2\ell}\big)\\
		&=(-1)^{s+1}\chi_d^\ell(-aB)\chi_2(B)\tau^s\sqrt{q} G_1\big(\eta_{2D}^{D-2\ell}\big)-\chi_d^\ell(-aB)G_1\big(\eta_{2D}^{-2\ell}\big).\\
	\end{aligned}$$
\end{itemize}
Our statement follows from the values of $S_{\ell}$ found and  Equations~\eqref{eq6} and \eqref{eq5}. The case where $D$ is even is obtained similarly.

\end{proof}

\section{Bounds and explict formulas for the number of rational points on $\mathcal X_{d,a,b}$
}\label{sec5}

\subsection{Proof of Theorem~\ref{theoremdcota}} The result follows by a direct employment of Proposition~\ref{termsofGauss} and Lemma~\ref{gausslemm}. 
Let us consider the case when $D$ is odd.
\begin{itemize}
\item If $B=0$, by Proposition \ref{termsofGauss} we have
\begin{align*}
|N_{d,a,b,n}| & = \frac{1}q\left|-(q-1) \sum_{j=1}^{v-1} G_n(\chi_d^{jD})\chi_d^{jD}(-a) \right|\\
& \le \frac{(q-1)}q \sum_{j=1}^{v-1}| G_n(\chi_d^{jD})|\\
& = (q-1)q^{n/2-1}\left(\frac{d}D-1\right).
\end{align*}

\item If $B \neq0$, then 
{\small \begin{align*}
|N_{d,a,b,n}| & = \frac{1}q\left| \sum_{\ell=1}^{d-1} G_n(\chi_d^{\ell})\big[(-1)^{s+1}\chi_d^{\ell}(-aB)\chi_2(B)\tau^s\sqrt{q}G_1(\eta_{2D}^{D-2\ell})-\chi_d^{\ell}(-aB)G_1(\eta_{2D}^{-2\ell})\big]\right|\\
& \le  \frac{1}q\sum_{\ell=1}^{d-1} |G_n(\chi_d^{\ell})|\big[\sqrt{q}|G_1(\eta_{2D}^{D-2{\ell}})| + |G_1(\eta_{2D}^{-2{\ell}})|\big] \\
& \le (d-1)(q^{n/2}+q^{(n-1)/2}).
\end{align*}}

\end{itemize}
Those inequalities along with Proposition~\ref{termsofGauss} assures us the result in the cases when $D$ is odd.  The case where $D$ is even is obtained similarly. $\hfill\qed$

\subsection{Proof of Theorem~\ref{theoremdcase2}}
We have that $$D= \frac{2}{v } = \begin{cases} 1,& \text{ if } n \text{ is even;} \\ 2, &  \text{ if } n \text{ is odd.} \end{cases}$$ To compute $\mid\mathcal{X}_{2,a,b}\mid $ we need to  determine 
$N$ in  Proposition~\ref{termsofGauss}. 
In the case when $n$ is even we have $D=1$, therefore 
\begin{enumerate}
\item If $B=0$, then 
$$\begin{aligned}
N & = \frac{1}q \left(-(q-1)G_n(\chi_2)\chi_2(-a)\right)\\
& = q^{\frac{n-2}2}(q-1)(-1)^{ns+2}\tau^{ns}\chi_2(-a)\\
& = q^{\frac{n-2}2}(q-1)\tau^{ns}\chi_2(-a).
\end{aligned}$$
\item If $B \neq 0$:
		$$\begin{aligned} N& =\frac{1}q\left(G_n(\chi_2)\Big[(-1)^{s+1}\chi_2(-aB)\chi_2(B)\tau^s\sqrt{q} G_1\big(\eta_{2}\big)-\chi_2(-aB)G_1\big(\eta_{2}^{2}\big)\Big]\right)\\
		& =  (-1)^{ns+1}\tau^{ns}q^{\frac{n-2}2}\Big[(-1)^{s+1}\chi_2(-a)\tau^{2s}(-1)^{s+1} q+\chi_2(-aB)\Big]\\
& =- \tau^{ns}q^{\frac{n-2}2}\Big[\chi_2(-a)\tau^{2s}q+\chi_2(-aB)\Big].\\  \end{aligned}$$
\end{enumerate}
In the case when $n$ odd, we have that $D$ is even,  then  \begin{enumerate}
\item If $B=0$:
$$\begin{aligned} N&=\frac{q-1}q\left((-1)^{s+1}\tau^s\sqrt{q}G_n(\chi_2)\chi_2(-a)\right)\\
& =\frac{q-1}{q}(-1)^{s+1}\tau^s(-1)^{ns-1}\tau^{ns}q^{\frac{n+1}2}\chi_2(-a)\\
& =\tau^{(n+1)s}(q-1)\chi_2(-a)q^{\frac{n-1}2}. \\
\end{aligned}$$

\item If $B \neq 0$:
			$$\begin{aligned} N& = \frac{1}q G_n(\chi_2)\Big[(-1)^{s+1}\chi_2(-aB)\chi_2(B)\tau^s\sqrt{q} G_1\big(\eta_{2}^{0}\big)-\chi_2(-aB)G_1\big(\eta_{2}\big)\Big]\\
							& = (-1)^{ns-1}\tau^{ns}q^{\frac{n-2}2}\Big[(-1)^{s+1}\chi_2(-a)\tau^s\sqrt{q} (-1)-\chi_2(-aB)(-1)^{s-1}\tau^s \sqrt{q}\Big]\\
							& =-\tau^{s(n+1)}q^{\frac{n-1}2}\Big[\chi_2(-a) +\chi_2(-aB)\Big].\\
\end{aligned}$$
 Combined with Proposition \ref{termsofGauss}, the theorem follows. $\hfill\qed$

\end{enumerate}

\subsection{Proof of Theorem~\ref{teores}}
Let $M$ be the number of elements $\alpha\in\fqn$ such that both $\alpha$ and $\Tr(\alpha)$ are quadratic residues in $\fqn$. Let $\Lambda_2$ be the set of quadratic residues in $\fqn^*$ and $\Lambda=\fqn^*\backslash\Lambda_2$. We recall that, for $x\in\fqn$,  $\chi_2(x)=1$ if and only if $x\in\Lambda_2$. Then we have that
$$\big[1+\chi_2(x)\big]\big[1+\chi_2(\Tr(x))\big]=\begin{cases}
    0,&\text{ if either }x\in\Lambda\text{ or }\Tr(x)\in\Lambda;\\
    1,&\text{ if }x=0;\\
    2,&\text{ if }x\in\Lambda_2\text{ and }\Tr(x)=0;\\
    4,&\text{ if }x\in\Lambda_2\text{ and }\Tr(x)\in\Lambda_2.\\
\end{cases}$$
Therefore, Schur's orthogonality relations (Lemmas~\ref{sumlemm} and \ref{dpower}) imply that
\begin{equation}\label{equati5}
  M =\tfrac{1}{2}+\tfrac{1}{4}\sum_{x\in\fqn}\big[1+\chi_2(x)\big]\big[1+\chi_2(\Tr(x))\big]+\tfrac{1}{4q}\sum_{x\in\fqn}\big[1+\chi_2(x)\big]\sum_{c\in\fq}\tilde\psi(c \Tr(x)).
  \end{equation}
We note that Lemma~\ref{item433} implies that
\begin{equation}\label{equati1}
\sum_{x\in\fqn}\big[1+\chi_2(x)\big]\big[1+\chi_2(\Tr(x))\big]=\sum_{x\in\fqn}\big[1+\chi_2(\Tr(x))+\chi_2(x\Tr(x))\big].
\end{equation}
Since $n$ is odd, $\chi_2$ is also the multiplicative character of order 2 over $\fq$. Since $\Tr(x)$ is a linear transformation over $\fqn$ whose image is $\fq$, each element in $\fq$ has $q^{n-1}$ elements in its preimage. Therefore, 
\begin{equation}\label{equati2}
    \sum_{x\in\fqn}\chi_2(\Tr(x))=q^{n-1}\sum_{y \in \fq}\chi_2(y) = 0.
\end{equation}
We note that
\begin{equation}\label{equati3}
    |\mathcal{X}_{2,1,0}(\fqn)|=\sum_{x\in\fqn}\big[1+\chi_2(x\Tr(x))\big]=q^n+\sum_{x\in\fqn} \chi_2(x\Tr(x)).
\end{equation}
From \eqref{equati1}, \eqref{equati2} and \eqref{equati3}, it follows that
\begin{equation}\label{equati4} \sum_{x\in\fqn}\big[1+\chi_2(x)\big]\big[1+\chi_2(\Tr(x))\big]=|\mathcal{X}_{2,1,0}(\fqn)|.
\end{equation}
For the last sum in \eqref{equati5}, Lemma~\ref{sumlemm} implies that
\begin{equation}\label{equati7}
\sum_{x\in\fqn}\big[1+\chi_2(x)\big]\sum_{c\in\fq}\tilde\psi(c \Tr(x))=q^{n-1}\cdot q+\sum_{c\in\fq}\sum_{x\in\fqn}\chi_2(x)\tilde\psi(c \Tr(x)).
\end{equation}
The map $x\mapsto\Tr(x))$ is a linear map over $\fq$ so that $c\Tr(x)=\Tr(cx)$  for all $c\in\fq$ and then $\tilde\psi(c\Tr(x)=\tilde\psi(\Tr(cx))=\psi(cx)$. Therefore, by setting $z=cx$, we obtain that
\begin{equation}\label{equati6}
\begin{aligned}
\sum_{c\in\fq}\sum_{x\in\fqn}\chi_2(x)\tilde\psi(c \Tr(x))&=\sum_{c\in\fq}\chi_2(c)\sum_{z\in\fqn}\chi_2(z)\psi(z)\\
&=G_n(\chi_2)\sum_{c\in\fq}\chi_2(c)\\
&=0,\\
\end{aligned}
\end{equation}
where the last equality follows by Lemma~\ref{item433}. In sum, Equations~\eqref{equati5}, \eqref{equati4}, \eqref{equati7} and \eqref{equati6} imply that
$$M=\frac{|\mathcal{X}_{2,1,0}(\fqn)|+q^{n-1}+2}{4}.$$
Now our result follows by Theorem~\ref{theoremdcase2}.$\hfill\qed$

\subsection{Proof of Theorem~\ref{casebeuquals0}} \begin{enumerate}
\item Assume that $v$ is $(p,r)$-admissible and $D$ is odd. Proposition~\ref{termsofGauss} states that
\begin{align*}
    N=-\frac{q-1}q\sum_{j=1}^{v-1}  G_n(\chi_d^{jD})\chi_d^{jD}(-a).
\end{align*}
We recall that $\chi_d^D$ has order $\tfrac{d}{D}=v$. If $v=1$, then $N=0$. If $v=2$, then Theorem~\ref{gaussdois} entails that
\begin{align*}
    N& =-(q-1)(-\tau^{sn} q^{\frac{n-2}{2}}\chi_2(-1)\chi_2(a))\\ &=(q-1)\tau^{sn} q^{\frac{n-2}{2}}\chi_2(a),
\end{align*}
since $\chi_2(-1)=\tau^{2ns}=1$. If $v>2$, then Theorem~\ref{item405} implies that
$$\begin{aligned}N&=-\frac{q-1}q(-\varepsilon) q^{\frac{n}2}\sum_{j=1}^{v-1}\big(\varepsilon^{uD}\chi_d^{D}(-a)\big)^j\\
	&=q^{\frac{n-2}2}(q-1)\varepsilon\theta_{v}(-a,\varepsilon^{uD}).\\
\end{aligned}$$

\item Let us assume that $2v$ is $(p,r)$-admissible and $D$ is even. By Proposition~\ref{termsofGauss} we have 
\[N =\frac{q-1}q\left((-1)^{s+1} \tau^s \sqrt{q} \sum_{j=0}^{v-1}G_n(\chi_d^{jD+\frac{D}2})\chi_d^{jD+\frac{D}2}(-a) -
\sum_{j=1}^{v-1}G_n(\chi_d^{jD})\chi_d^{jD}(-a)\right).\] 
If $v=1$, $d=D$ and it follows that 
\begin{align*}
N &=\frac{q-1}q\left( (-1)^{s+1}\tau^s \sqrt{q}G_n(\chi_d^ {\frac{d}2})\chi_d^{\frac{d}2}(-a)\right)\\
&  = \frac{q-1}{q}(-1)^{s+1}\tau^s\sqrt{q}G_n(\chi_2)\chi_2(-a)\\
& =(-1)^{ns+s}(q-1)q^{\frac{n-1}2} \tau^{s(n+1)} \chi_2(a)\\ 
& =(-1)^{s}(q-1)q^{\frac{n-1}2} \tau^{s(n+1)} \chi_2(a).
\end{align*}
If $v = 2, $ we have $d =2D$. By hypotheses, $4$ is $(p,r)$-admissible, and $u = \frac{p^r+1}{d}= \frac{p^r+1}{4 \cdot D/2}$. Then 
{\small \begin{align*}
N &= \frac{q-1}{q} \left((-1)^{s+1}\tau^s \sqrt{q}(G_n(\chi_d^{\frac{d}4})\chi_d^{\frac{d}4}(-a)+ G_n(\chi_d^{\frac{3d}4})\chi_d^{\frac{3d}4}(-a))-G_n(\chi_d^{\frac{d}2})\chi_d^{\frac{d}2}(-a)\right)\\
&  =\frac{q-1}{q} \left( (-1)^{s+1}\tau^s \sqrt{q}(G_n(\chi_4)\chi_4(-a)+G_n(\overline{\chi_4})\overline{\chi_4}(-a))-G_n(\chi_2)\chi_2(-a)\right)\\
& =\frac{q-1}{q} \left( (-1)^{s+1}\tau^s \sqrt{q}(-\varepsilon^{u\frac{D}2+1}q^{\frac{n}2}\chi_4(-a)+\chi_4(-1)(-\varepsilon^{u\frac{D}2+1}q^{\frac{n}2})\overline{\chi_4}(-a))+(-1)^{sn}\tau^{ns}q^{\frac{n}2}\chi_2(-a)\right)\\
& =  q^{\frac{n-2}2}(q-1)\left((-1)^{s}\tau^s \sqrt{q}\varepsilon^{u\frac{D}2+1}(\chi_4(-a)+\overline{\chi_4}(a))+\tau^{ns}\chi_2(a)\right).\\
\end{align*}}
 If $v>2$, then Theorem~\ref{item405} implies that
 {\small \begin{align*}
 N & =\frac{q-1}q\left( (-1)^{s+1}\tau^sq^{\frac{n+1}2}\chi_{2v}(-a)(-\varepsilon^{1+\frac{uD}2})\sum_{j=0}^{v-1} \big(\varepsilon^{2u}\chi_{v}(-a)\big)^j - q^{\frac{n}2}(-\varepsilon) \sum_{j=1}^{v-1}\big(\varepsilon^{uD}\chi_{v}(-a)\big)^j\right)\\
& =\frac{q-1}q\left((-1)^{s}\tau^sq^{\frac{n+1}2}\chi_{2v}(-a)\varepsilon^{1+\frac{uD}2}[1+\theta_{v}(-a,1)]+q^{\frac{n}2}\varepsilon \theta_{v}(-a,1)\right)\\
&= (q-1)q^{\frac{n-2}2}\varepsilon\Big[(-1)^{s}\tau^s\sqrt{q}\chi_{2v}(-a)\varepsilon^{\frac{uD}2}[1+\theta_{v}(-a,1)]+ \theta_{v}(-a,1)\Big].
 \end{align*}}
\end{enumerate} $\hfill\qed$

\section{The case $\Tr(b/a)\neq 0$}\label{sec7}

\begin{definition}
    If $D$ is $(p,r_0)$-admissible, we define
$$ \varepsilon_0=(-1)^{\frac{s}{2r_0}},\quad   \text{ and } \quad u_0=\begin{cases}
\tfrac{p^{r_0}+1}{2D},&\text{if $D$ is odd;}\vspace{2mm}\\ 
\tfrac{p^{r_0}+1}{D},&\text{if $D$ is even.}
\end{cases}$$
\end{definition}

\begin{theorem}\label{theoremdcase}
Let $a,b \in\fqn$ such that $B\neq0$. 
If $d$ is $(p,r)$-admissible, $D$ is odd and  $(p,r_0)$-admissible, 
then 
	$$N=N_1 q^{\frac{n}{2}}+N_2 q^{\frac{n-1}{2}}+N_3 q^{\frac{n-2}{2}},$$
	 where
\begin{itemize}
	\item $N_1=(-1)^{s+1}\varepsilon\varepsilon_0^{1+u_0D}\tau^s\chi_2(B) \theta_d(-aB,\varepsilon^u);$
	\item $N_2=\varepsilon\varepsilon_0\left(-\theta_d(-aB,\varepsilon^u)+\theta_{v}(-aB,\varepsilon^{u})\right);$
	\item $N_3=-\varepsilon \theta_{v}(-aB,\varepsilon^{u});$
\end{itemize}
\end{theorem}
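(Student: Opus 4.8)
The plan is to start from the expression for $N$ in Proposition~\ref{termsofGauss}, case (2) (the case $D$ odd, $B\neq 0$), namely
\[
N=\frac{1}{q}\sum_{\ell=1}^{d-1} G_n(\chi_d^\ell)\Big[(-1)^{s+1}\chi_d^\ell(-aB)\chi_2(B)\tau^s\sqrt{q}\,G_1\big(\eta_{2D}^{D-2\ell}\big)-\chi_d^\ell(-aB)G_1\big(\eta_{2D}^{-2\ell}\big)\Big],
\]
and to evaluate each of the three Gauss-sum ingredients using the purity hypotheses. Since $d$ is $(p,r)$-admissible, Theorem~\ref{item405} gives $G_n(\chi_d^\ell)=-(-1)^{ns(u\ell+1)/2r}q^{n/2}=-\varepsilon\,\varepsilon^{u\ell}q^{n/2}$ for $\ell\not\equiv 0\pmod d$. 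Since $D$ is $(p,r_0)$-admissible and $D$ is odd, $2D\mid p^{r_0}+1$, so $\eta_{2D}$ has pure Gauss sums over $\fq$ as well; applying Theorem~\ref{item405} over $\fq$ to $G_1(\eta_{2D}^{k})$ — being careful about the case $\eta_{2D}^{k}$ trivial, where $G_1=-1$, and the case $\eta_{2D}^k=\eta_2$, where one uses Corollary~\ref{corolchartwo}/Theorem~\ref{gaussdois} — yields $G_1(\eta_{2D}^{D-2\ell})$ and $G_1(\eta_{2D}^{-2\ell})$ as $-\varepsilon_0\varepsilon_0^{u_0(\cdot)}\sqrt q$ up to the exceptional $-1$ terms.

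Next I would substitute these closed forms and collect the sum over $\ell$ into geometric-type sums. After pulling out $q^{n/2}$, the sum splits into three pieces according to the powers of $q$ that survive: a $\sqrt q$-term coming from the product $G_n(\chi_d^\ell)\cdot\sqrt q\cdot G_1(\eta_{2D}^{D-2\ell})$, giving the $q^{n/2}\cdot\sqrt q/q \sim q^{(n+1)/2}$... wait — more precisely, tracking the powers: $\tfrac1q\cdot q^{n/2}\cdot\sqrt q\cdot\sqrt q=q^{n/2}$ for the first term, $\tfrac1q\cdot q^{n/2}\cdot\sqrt q\cdot(-1)$ and $\tfrac1q\cdot q^{n/2}\cdot(-1)\cdot\sqrt q$ give $q^{(n-1)/2}$ contributions (from the exceptional $-1$'s paired with a genuine $\sqrt q$), and $\tfrac1q\cdot q^{n/2}\cdot(-1)\cdot(-1)=q^{(n-2)/2}$ for the doubly-exceptional term. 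The exceptional indices are exactly those $\ell$ with $\eta_{2D}^{D-2\ell}$ or $\eta_{2D}^{-2\ell}$ trivial, i.e.\ $D\mid\ell$ (using $D$ odd so that $\eta_{2D}^{-2\ell}$ trivial iff $D\mid\ell$, and $\eta_{2D}^{D-2\ell}$ trivial iff $D\mid\ell$ as well since $D$ is odd); over those indices $\ell=jD$, the character $\chi_d^{jD}$ restricted to $\fq^*$ is trivial and $\chi_d^{jD}$ has order $v$, which is exactly where the $\theta_v$ terms come from, while the full range $1\le\ell\le d-1$ produces the $\theta_d$ terms. Each geometric sum $\sum_\ell (\varepsilon^{u}\chi_d(-aB))^\ell$ (or its restriction to multiples of $D$) collapses to $d\cdot[\![\chi_d(-aB)=\varepsilon^{-u}]\!]-1=\theta_d(-aB,\varepsilon^u)$ by the definition of $\theta_m$ and Lemma~\ref{dpower}; similarly for $v$.

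The three coefficients $N_1,N_2,N_3$ then fall out by bookkeeping the signs $\varepsilon,\varepsilon_0,\tau^s,(-1)^{s+1}$ and the exponents $1+u_0D$ (from the $\varepsilon_0^{u_0(D-2\ell)+1}$ factor at $\ell\equiv 0$, noting $\varepsilon_0^{-2u_0\ell}$ telescopes into the geometric sum base while $\varepsilon_0^{u_0 D}$ is a constant) and $\chi_2(B)$. Concretely: $N_1$ is the pure-$\sqrt q$ term, carrying $(-1)^{s+1}\tau^s\chi_2(B)$ from the explicit prefactor, $\varepsilon$ from $G_n$, $\varepsilon_0$ from $G_1$, the extra $\varepsilon_0^{u_0 D}$ from evaluating the $\eta_{2D}^{D-\cdots}$ twist at the appropriate shift, and $\theta_d(-aB,\varepsilon^u)$ from the $\ell$-sum; $N_2$ collects the two cross terms (one genuine $\sqrt q$ against one exceptional $-1$), which is why it has the difference $-\theta_d(-aB,\varepsilon^u)+\theta_v(-aB,\varepsilon^u)$; and $N_3$ is the doubly-exceptional term $-\varepsilon\,\theta_v(-aB,\varepsilon^u)$. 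I expect the main obstacle to be precisely this sign-and-exponent bookkeeping: one must be scrupulous about (i) which multiplicative character over $\fq$ a given power $\eta_{2D}^{k}$ actually is (trivial, quadratic, or of order dividing $D$) and hence which of Theorem~\ref{item405}, Corollary~\ref{corolchartwo}, or Lemma~\ref{gausslemm}(1) applies, (ii) the compatibility of the two admissibility exponents $r$ and $r_0$ (so that $\varepsilon^{u\ell}$ and $\varepsilon_0^{u_0(\cdot)}$ combine correctly when $\ell$ is and is not a multiple of $D$), and (iii) correctly splitting the range $1\le\ell\le d-1$ into the $D\mid\ell$ part (size $v-1$ for the $\eta^{-2\ell}$ exceptional set, contributing $\theta_v$) versus the complementary part, so that the $\theta_d$ versus $\theta_v$ distinction comes out right. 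Once the exceptional index sets are pinned down, the rest is the same geometric-sum computation already carried out in the proof of Theorem~\ref{casebeuquals0}.
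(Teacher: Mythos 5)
Your overall route is the same as the paper's: start from Proposition~\ref{termsofGauss}(2), evaluate $G_n(\chi_d^\ell)$ via Theorem~\ref{item405} using the $(p,r)$-admissibility of $d$, evaluate the two $G_1$ factors using $2D\mid p^{r_0}+1$, and collapse the resulting geometric sums into $\theta_d$ and $\theta_v$ via Lemma~\ref{dpower}. The paper organizes this as a generic formula applied to all $\ell$ plus a correction term $R$ supported on $D\mid\ell$; your direct case split is equivalent bookkeeping.

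There is, however, one concrete error in your identification of the exceptional indices. You assert that $\eta_{2D}^{D-2\ell}$ is trivial iff $D\mid\ell$. Since $D$ is odd, $D-2\ell$ is odd for every $\ell$, so $D-2\ell\not\equiv 0\pmod{2D}$ and $\eta_{2D}^{D-2\ell}$ is \emph{never} trivial; at $D\mid\ell$ it equals the quadratic character $\eta_2$, whose Gauss sum still has modulus $\sqrt q$ and still obeys the pure formula $-\varepsilon_0^{u_0D+1}\sqrt q$ (Corollary~\ref{corolchartwo} is consistent with Theorem~\ref{item405} here). Consequently the first bracket term contributes at order $q^{n/2}$ for \emph{every} $\ell$ --- which is exactly why $N_1$ carries a full $\theta_d$ with no $\theta_v$ correction --- and $N_2$ has a single source, namely the second bracket term restricted to $D\nmid\ell$ (yielding $-\theta_d+\theta_v$), not ``two cross terms.'' If you executed your plan literally with $G_1(\eta_{2D}^{D-2\ell})=-1$ at $D\mid\ell$, you would introduce a spurious $q^{(n-1)/2}$ contribution and lose part of $N_1$. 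Your own caveat about the case $\eta_{2D}^{k}=\eta_2$ points at the fix, but the exceptional set must be corrected before the coefficients come out as stated.
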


\begin{proof}
By hypotheses $d$ is $(p,r)$-admissible and $D$ is odd. By Proposition~\ref{termsofGauss},
$$N=\frac{1}q\sum_{\ell=1}^{d-1}  G_n(\chi_d^\ell)\Big[(-1)^{s+1}\chi_d^\ell(-aB)\chi_2(B)\tau^s\sqrt{q} G_1\big(\eta_{2D}^{D-2\ell}\big)-\chi_d^\ell(-aB)G_1\big(\eta_{2D}^{-2\ell}\big)\Big].$$
 Since $D$ is $(p,r_0)$-admissible and $p^{r_0}+1$ is even, it follows that $2D$ divides $p^{r_0}+1$. Therefore, Theorem~\ref{item405} and Corollary~\ref{corolchartwo} imply that
$$\begin{aligned}
	N&=-q^{\frac{n-2}{2}} \sum_{\ell=1}^{d-1} \varepsilon^{u\ell+1}\chi_d^\ell(-aB)\Big[(-1)^s\tau^s\chi_2(B) \varepsilon_0^{u_0(2\ell+D)+1}q+\varepsilon_0^{2u_0\ell+1} \sqrt{q}\Big]+R\\
	&=-q^{\frac{n-1}{2}} \varepsilon\varepsilon_0\Big((-1)^s\tau^s\chi_2(B) \varepsilon_0^{u_0D}\sqrt{q}+1\Big)\sum_{\ell=1}^{d-1} \big(\chi_d(-aB)\varepsilon^u\varepsilon_0^{2u_0}\big)^{\ell}+R\\
	&=-q^{\frac{n-1}{2}} \varepsilon\varepsilon_0\Big((-1)^s\tau^s\chi_2(B) \varepsilon_0^{u_0D}\sqrt{q}+1\Big)\theta_d(-aB,\varepsilon^u)+R,\\
\end{aligned}$$
where $R$ is the sum of the terms in the case when the multiplicative caracter is trivial, i.e.,
$$\begin{aligned}
	R&=\frac{1}{q} \left(q^{\frac{n}{2}} \sum_{j=1}^{d/D-1} \varepsilon^{ujD+1}\chi_d^{jD}(-aB)\Big[-1+\sqrt{q}\varepsilon_0^{2u_0jD+1}\Big]\right)\\
	&=-q^{\frac{n-2}{2}} \varepsilon\sum_{j=1}^{d/D-1}\Big[ \big(\varepsilon^{uD}\chi_d^{D}(-aB)\big)^j-\varepsilon_0\big(\varepsilon^{uD}\varepsilon_0^{2u_0D}\chi_d^{D}(-aB)\big)^j\sqrt{q}\Big]\\
	&=q^{\frac{n-2}{2}} \varepsilon(-1+\varepsilon_0\sqrt q)\theta_{v}(-aB,\varepsilon^{u}).\\
\end{aligned}$$
Rearranging the terms, we obtain the expression
$$N=N_1 q^{\frac{n}{2}}+N_2 q^{\frac{n-1}{2}}+N_3 q^{\frac{n-2}{2}},$$
proving the statement.
\end{proof}

  \begin{theorem}\label{theoremdcaseeven}
	Assume that $B\neq0$ and $d>2$. 	We denote $\alpha_1 =\theta_d(-aB,\varepsilon^u\varepsilon_0^{u_0})$ and $  \alpha_2 =\theta_{v}(-aB,1) .$
	If $d$ is $(p,r)$-admissible,  $D$ is even and  $(p,r_0)$-admissible, 
	then 
	$$N=N_1 q^{\frac{n}{2}}+N_2 q^{\frac{n-1}{2}}+N_3 q^{\frac{n-2}{2}},$$
	where
	{\small \begin{itemize}
	\item $N_1=(-1)^s\varepsilon\varepsilon_0\tau^s\chi_2(B)\big[-\varepsilon_0^{u_0\frac{D}2}\alpha_1+\varepsilon^{u\frac{D}2}\chi_{2v}(-aB)(1+\alpha_2)\big],$ 
	 \item  $ N_2=\varepsilon\varepsilon_0\big(\alpha_2-\alpha_1\big)-(-1)^{s}\varepsilon^{u\frac{D}2+1}\tau^s\chi_{2v}(-aB)\chi_2(B)(1+\alpha_2),$
	\item $N_3=-\varepsilon \alpha_2,$
\end{itemize}}
\end{theorem}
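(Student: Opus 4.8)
The plan is to run exactly the argument used for Theorem~\ref{theoremdcase}, but starting from part (4) of Proposition~\ref{termsofGauss} instead of part (2), since here $B\neq 0$ and $D$ is \emph{even}. Thus the starting point is
\[N=\frac1q\sum_{\ell=1}^{d-1}G_n(\chi_d^\ell)\Big[(-1)^{s+1}\chi_d^\ell(-aB)\chi_2(B)\tau^s\sqrt q\,G_1\big(\eta_{D}^{\frac{D}{2}-\ell}\big)-\chi_d^\ell(-aB)G_1\big(\eta_{D}^{-\ell}\big)\Big].\]
First I would use the $(p,r)$-admissibility of $d$ and Theorem~\ref{item405} to replace each $G_n(\chi_d^\ell)$ (for $1\le\ell\le d-1$) by $-\varepsilon^{u\ell+1}q^{n/2}$. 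Next, since $D$ is $(p,r_0)$-admissible it is $>2$ and divides $q-1$, so Theorem~\ref{item405} applied to the order-$D$ character $\eta_D$ (together with Corollary~\ref{corolchartwo} if a quadratic Gauss sum over $\fq$ is preferred) gives $G_1(\eta_D^{j})=-\varepsilon_0^{u_0 j+1}\sqrt q$ whenever $D\nmid j$, while $G_1(\eta_D^{j})=-1$ when $D\mid j$. This splits the index set into a generic part and two exceptional families: $\ell\equiv 0\pmod D$, where $G_1(\eta_D^{-\ell})$ degenerates, and $\ell\equiv D/2\pmod D$, where $G_1(\eta_D^{D/2-\ell})$ degenerates.

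Then I would write $N=N_{\mathrm{gen}}+R_1+R_2$, where $N_{\mathrm{gen}}$ uses the pure Gauss sum value of every $G_1$-factor and $R_1,R_2$ are the corrections from $\ell\equiv D/2\pmod D$ and $\ell\equiv 0\pmod D$, respectively. Using $\varepsilon_0^{2u_0\ell}=1$ the bracket in $N_{\mathrm{gen}}$ factors as $\varepsilon_0^{u_0\ell+1}\big[(-1)^{s}\tau^s\chi_2(B)\varepsilon_0^{u_0 D/2}q+\sqrt q\big]$, so $N_{\mathrm{gen}}$ collapses through the geometric sum $\sum_{\ell=1}^{d-1}\big(\varepsilon^{u}\varepsilon_0^{u_0}\chi_d(-aB)\big)^{\ell}=\theta_d(-aB,\varepsilon^{u}\varepsilon_0^{u_0})=\alpha_1$, giving $N_{\mathrm{gen}}=-\varepsilon\varepsilon_0\alpha_1\big[(-1)^s\tau^s\chi_2(B)\varepsilon_0^{u_0D/2}q^{n/2}+q^{(n-1)/2}\big]$. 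For $R_1$ the relevant indices are $\ell=\tfrac{D}{2}+jD$, $0\le j\le v-1$; after reducing $\varepsilon^{u\ell}$ and $\varepsilon_0^{u_0\ell}$ via $\varepsilon^{p^r+1}=\varepsilon_0^{p^{r_0}+1}=1$ (both hold since $\varepsilon,\varepsilon_0\in\{\pm1\}$ and $p^r+1,p^{r_0}+1$ are even, whence $\varepsilon^{uD}=\varepsilon_0^{u_0D}=1$) and using $\chi_d^{D/2}=\chi_{2v}$, $\chi_d^{D}=\chi_{v}$, the sum collapses through $\sum_{j=0}^{v-1}\chi_v^{j}(-aB)=1+\theta_v(-aB,1)=1+\alpha_2$. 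For $R_2$ the indices are $\ell=jD$, $1\le j\le v-1$, and the same reductions give $\sum_{j=1}^{v-1}\chi_v^{j}(-aB)=\theta_v(-aB,1)=\alpha_2$. Finally I would expand the resulting elementary prefactors (each of the shape $\varepsilon_0\sqrt q-1$ up to constants) in integer powers of $q^{1/2}$ and read off the coefficients of $q^{n/2}$, $q^{(n-1)/2}$ and $q^{(n-2)/2}$; these are precisely the claimed $N_1$, $N_2$ and $N_3$.

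The step I expect to be delicate is the simultaneous bookkeeping of the two degenerate families: for each exceptional $\ell$ one must identify which of the two $G_1$-factors is trivial, substitute $-1$ there and the pure value elsewhere, and correctly simplify the mixed exponents $\varepsilon^{u\ell+1}$, $\varepsilon_0^{u_0\ell+1}$ and $\varepsilon_0^{u_0(D/2-\ell)+1}$ at those indices using $\varepsilon^{p^r+1}=\varepsilon_0^{p^{r_0}+1}=1$ and $\varepsilon_0^{2u_0\ell}=1$. Once these normalizations are in place, the three geometric sums are routine and the coefficients $N_1,N_2,N_3$ fall out after a straightforward (if somewhat lengthy) rearrangement, exactly as in the proof of Theorem~\ref{theoremdcase}.
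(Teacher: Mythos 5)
Your proposal follows essentially the same route as the paper's proof: start from part (4) of Proposition~\ref{termsofGauss}, evaluate $G_n(\chi_d^\ell)$ and $G_1(\eta_D^j)$ via Theorem~\ref{item405} using the $(p,r)$-admissibility of $d$ and the $(p,r_0)$-admissibility of $D$, peel off the two exceptional residue classes $\ell\equiv 0$ and $\ell\equiv D/2\pmod D$ as corrections, collapse the three geometric sums into $\alpha_1$, $1+\alpha_2$ and $\alpha_2$, and sort by powers of $q^{1/2}$. The only nitpick is your justification of $\varepsilon^{uD}=1$: it holds because $uD=(p^r+1)/v$ is even (as $D$ is even), not as a direct consequence of $\varepsilon^{p^r+1}=1$; with that correction the bookkeeping matches the paper's exactly.
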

\begin{proof}
By hypotheses $d$ is $(p,r)$-admissible and $D$ is even. By Proposition~\ref{termsofGauss},
 $$N = \frac{1}q\sum_{\ell=1}^{d-1}G_n(\chi_d^\ell)\Big[(-1)^{s+1}\chi_d^\ell(-aB)\chi_2(B) \tau^s \sqrt{q} G_1(\eta_D^{\frac{D}2-\ell})-\chi_d^\ell(-aB)G_1(\eta_D^{-\ell})\Big].$$
Therefore, since $D$ is $(p,r_0)$ admissible
$$\begin{aligned}
	N&=-q^{\frac{n-2}{2}} \sum_{\ell=1}^{d-1} \varepsilon^{u\ell+1}\chi_d^\ell(-aB)\Big[-(-1)^{s+1}\tau^s\chi_2(B) \varepsilon_0^{u_0(\ell+\frac{D}2)+1}q+\varepsilon_0^{u_0\ell+1}\sqrt{q}\Big]+R\\
	&=-q^{\frac{n-1}{2}} \varepsilon\varepsilon_0\Big((-1) ^s\tau^s\chi_2(B) \varepsilon_0^{u_0\frac{D}2}\sqrt{q}+1\Big)\sum_{\ell=1}^{d-1} \big(\chi_d(-aB)\varepsilon^u\varepsilon_0^{u_0}\big)^{\ell}+R,\\
	&=-q^{\frac{n-1}{2}} \varepsilon\varepsilon_0\Big((-1)^s
	\tau^s\chi_2(B) \varepsilon_0^{u_0\frac{D}2}\sqrt{q}+1\Big)\theta_d(-aB,\varepsilon^u\varepsilon_0^{u_0})+R,\\
\end{aligned}$$

where 
$R=R_1+R_2$, 
{\small $$\ \hspace{-1cm}\begin{aligned}
R_1 & =- q^{\tfrac{n-2}2} \sum_{j=0}^{d/D-1} \varepsilon^{u\big(\tfrac{D}2(2j+1)\big)+1}\chi_d^{\tfrac{D}2(2j+1)}(-aB)(-1)^{s+1}\big[-\chi_2(B)\sqrt{q}\tau^s+\tau^s\chi_2(B)\varepsilon_0^{u_0\big(\frac{D}2(2j+1)+\frac{D}2\big)+1}q\big]\\
& = (-1)^{s}q^{\tfrac{n-1}2} \varepsilon^{u\frac{D}2+1}\tau^s\chi_{2v}(-aB)\chi_2(B)\sum_{j=0}^{d/D-1}\big(\varepsilon^{uD}\chi_{v}(-aB)\big)^j(-1+\sqrt{
q} \varepsilon_0^{u_0Dj+u_0D+1})\\
& =(-1)^{s} q^{\tfrac{n-1}2} \varepsilon^{u\frac{D}2+1}\tau^s\chi_{2v}(-aB)\chi_2(B)\sum_{j=0}^{d/D-1}\big[-\big(\varepsilon^{uD}\chi_{v}(-aB)\big)^j+\big(\varepsilon^{uD}\varepsilon_0^{u_0D}\chi_{v}(-aB)\big)^j\varepsilon_0^{u_0D+1}\sqrt{
q}\big] \\
& =(-1)^{s} q^{\tfrac{n-1}2} \varepsilon^{u\frac{D}2+1}\tau^s\chi_{2v}(-aB)\chi_2(B)\big[-(1+\theta_{v}(-aB, \varepsilon^{uD}))+(1+\theta_{v}(-aB, \varepsilon^{uD}\varepsilon_{0}^{u_0D}))\varepsilon_0^{u_0D+1}\sqrt{q}\big] \\
& = (-1)^{s} q^{\tfrac{n-1}2} \varepsilon^{u\frac{D}2+1}\tau^s\chi_{2v}(-aB)\chi_2(B)(1+\theta_v(-aB,1))(\varepsilon_0^{\frac{uD}2}\sqrt{q}-1)
\end{aligned}$$}
and
$$\begin{aligned}
	R_2&=-q^{\frac{n-2}{2}} \sum_{j=1}^{d/D-1} \varepsilon^{ujD+1}\chi_d^{jD}(-aB)\Big[1-\sqrt{q}\varepsilon_0^{u_0jD+1}\Big]\\
	&=-q^{\frac{n-2}{2}} \varepsilon\sum_{j=1}^{d/D-1}\Big[\big(\varepsilon^{uD}\chi_{v}(-aB)\big)^j-\varepsilon_0\big(\varepsilon^{uD}\varepsilon_0^{u_0D}\chi_{v}(-aB)\big)^j\sqrt{q}\Big]\\
	&=-q^{\frac{n-2}{2}}\varepsilon \Big[ \theta_{v}(-aB,1)-\varepsilon_0\theta_{v}(-aB,1)\sqrt{q}\Big]\\
	& = -q^{\frac{n-2}2}\varepsilon\theta_v(-aB,1)(1-\varepsilon_0\sqrt{q}).
\end{aligned}$$
Altogether, we have shown that
$$N=N_1 q^{\frac{n}{2}}+N_2 q^{\frac{n-1}{2}}+N_3 q^{\frac{n-2}{2}},$$
proving the statement.
\end{proof}


\printbibliography

@book{Lidl,
  title={Finite Fields},
  author={Lidl, R. and Niederreiter, H.},
  volume={20},
  year={1997},
  publisher={Cambridge {U}niversity {P}ress}
}

@article{evans1981pure,
  title={Pure Gauss Sums over Finite Fields},
  author={Evans, R. J.},
  journal={Mathematika},
  volume={28},
  number={2},
  pages={239--248},
  year={1981},
  publisher={Wiley Online Library}
}

@article{wolfmann1992number,
  title={The Number of Solutions of Certain Diagonal Equations over Finite Fields},
  author={Wolfmann, J.},
  journal={Journal of Number Theory},
  volume={42},
  number={3},
  pages={247--257},
  year={1992},
  publisher={Elsevier}
}

@article{kurlberg2009,
title ={The fluctuations in the number of points on a hyperelliptic curve over a finite field},
author={Kulberg, P. and Rudnick, Z. R.},
journal={Journal of Number Theory},
volume={129},
number={3}, 
pages={580--587},
year={2009},
publisher={Elsevier},
}

@article{cheong2005,
title ={The distribution of points on superelliptic curves over finite fields},
author={Gilyoung, C. and   Wood, M. M. and   Zaman, A.},
journal={Proceedings of the American Mathematical Society},
volume={143},
number={4}, 
pages={1365-1375},
year={2005},
}

@article{stohr1986weierstrass,
  title={Weierstrass points and curves over finite fields},
  author={St{\"o}hr, K. O.  and Voloch, J. F.},
  journal={Proceedings of the London Mathematical Society},
  volume={3},
  number={1},
  pages={1--19},
  year={1986},
  publisher={Oxford Academic}
}

@article{lenstra1987primitive,
  title={Primitive normal bases for finite fields},
  author={Lenstra Jr, H. W. and Schoof, R. J.},
  journal={Mathematics of Computation},
  pages={217--231},
  year={1987},
  publisher={JSTOR}
}

@article{oliveira2021diagonal,
  title={On diagonal equations over finite fields},
  author={Oliveira, J. A.},
  journal={Finite Fields and Their Applications},
  volume={76},
  pages={101927},
  year={2021},
  publisher={Elsevier}
}

@article{cohen2021primitive,
  title={Primitive values of rational functions at primitive elements of a finite field},
  author={Cohen, S. D. and Sharma, H. and Sharma, R.},
  journal={Journal of Number Theory},
  volume={219},
  pages={237--246},
  year={2021},
  publisher={Elsevier}
}

@article{cohen2019lehmer,
  title={Lehmer numbers and primitive roots modulo a prime},
  author={Cohen, S. D. and Trudgian, T.},
  journal={Journal of Number Theory},
  volume={203},
  pages={68--79},
  year={2019},
  publisher={Elsevier}
}

@article {MR4252114,
  title={Existence of primitive 2-normal elements in finite fields},
  author={Aguirre, J. J. R. and Neumann, V. G.},
  journal={Finite Fields and Their Applications},
  volume={73},
  pages={101864},
  year={2021},
  publisher={Elsevier}
}

@article{gupta2018primitive,
  title={Primitive element pairs with one prescribed trace over a finite field},
  author={Gupta, A. and Sharma, R. K. and Cohen, S. D.},
  journal={Finite Fields and Their Applications},
  volume={54},
  pages={1--14},
  year={2018},
  publisher={Elsevier}
}

@article{wolfmann1989number,
  title={The number of points on certain algebraic curves over finite fields},
  author={Wolfmann, J.},
  journal={Communications in Algebra},
  volume={17},
  number={8},
  pages={2055--2060},
  year={1989},
  publisher={Taylor \& Francis}
}

@article{coulter2002number,
  title={The number of rational points of a class of Artin--Schreier curves},
  author={Coulter, R. S.},
  journal={Finite Fields and Their Applications},
  volume={8},
  number={4},
  pages={397--413},
  year={2002},
  publisher={Elsevier}
}

@article{coulter1998explicit,
  title={Explicit evaluations of some Weil sums},
  author={Coulter, R. S.},
  journal={Acta Arithmetica},
  volume={83},
  number={3},
  pages={241--251},
  year={1998}
}

@article{galbraith2002arithmetic,
  title={Arithmetic on superelliptic curves},
  author={Galbraith, S. and Paulus, S. and Smart, N.},
  journal={Mathematics of computation},
  volume={71},
  number={237},
  pages={393--405},
  year={2002}
}

\end{document}